\pgfplotsset{compat=newest}
\newtheorem{thm}{Theorem}[section]
\newtheorem{lm}[thm]{Lemma}
\newtheorem{defn}[thm]{Definition}
\newtheorem{prop}[thm]{Proposition}
\numberwithin{equation}{section}
\newcommand{\Ai}{\mathrm{Ai}}
\newcommand{\Airy}{\mathcal{A}}
\newcommand{\bfK}{\mathbf{K}}
\newcommand{\bfL}{\mathbf{L}}
\newcommand{\bsn}{{\boldsymbol{n}}}
\newcommand{\bsz}{{\boldsymbol{z}}}
\newcommand{\complexC}{\mathbb{C}}
\newcommand{\FGUE}{F_{\mathrm{GUE}}}
\newcommand{\hKPZ}{\mathcal{H}^{\mathrm{KPZ}}}
\newcommand{\identity}{\mathbf{1}}
\newcommand{\inn}{\mathrm{in}}
\newcommand{\KAiry}{K_{\mathrm{Ai}}^{\mathrm{ext}}}
\newcommand{\out}{\mathrm{out}}
\newcommand{\prob}{\mathbb{P}}
\newcommand{\rC}{\mathrm{C}}
\newcommand{\rd}{\mathrm{d}}
\newcommand{\realR}{\mathbb{R}}
\newcommand{\rI}{\mathrm{I}}
\newcommand{\rmi}{\mathrm{i}}
\newcommand{\rL}{\mathrm{L}}
\newcommand{\rR}{\mathrm{R}}
\newcommand{\sgn}{\operatorname{sgn}}
\renewcommand{\Re}{\mathrm{Re}}
\renewcommand{\Im}{\mathrm{Im}}
\title{On the multipoint distribution formulas of the parabolic Airy process}
\author{Zhipeng Liu\footnote{Department of Mathematics, University of Kansas, Lawrence, KS 66045. Email: \texttt{zhipeng@ku.edu}} \and Aaron Ortiz\footnote{Department of Mathematics, University of Kansas, Lawrence, KS 66045. Email: \texttt{aortiz@ku.edu}}}
\begin{document}

\maketitle

\begin{abstract}
    The parabolic Airy process is the Airy$_2$ process minus a parabola, initially defined by its finite-dimensional distributions, which are given by a Fredholm determinant formula with the extended Airy kernel. This process is also the one-time spatial marginal of the KPZ fixed point with the narrow wedge initial condition. There are two formulas for the space-time multipoint distribution of the KPZ fixed point with the narrow wedge initial condition obtained by \cite{Johansson-Rahman21} and \cite{Liu22}. Especially, the equal-time case of \cite{Liu22} gives a different formula of the multipoint distribution of the parabolic Airy process. In this paper, we present a direct proof that this formula matches the one with the extended Airy kernel. Some byproducts in the proof include several new formulas for the parabolic Airy process, and a generalization of the Andreief's identity.
\end{abstract}

\section{Introduction}
The parabolic Airy process is defined to be $\Airy(\alpha)=\Airy_2(\alpha)-\alpha^2$, $\alpha\in\realR$, where $\Airy_2(\alpha)$ is the Airy$_2$ process introduced by Pr\"ahofer and Spohn  \cite{Prahofer-Spohn02}. It is conjectured to be a universal limit of the models in the (1+1)-dimensional Kardar-Parisi-Zhang universality class with the narrow-wedge initial condition \cite{Baik-Deift-Johansson99,Johansson00,Ferrari08b,Tracy-Widom09,Amir-Corwin-Quastel11,Borodin-Corwin13,Borodin-Corwin-Gorin16,Ferrari_2023}. The parabolic Airy process can be defined by its finite-dimensional distributions
\begin{equation}
\label{eq:def_AiryProcess}
\prob\left(\bigcap_{i=1}^m \left\{ \Airy (\alpha_i) \le \beta_i \right\}\right) =
\prob\left(\bigcap_{i=1}^m \left\{ \Airy_2 (\alpha_i) \le \beta_i+\alpha^2_i \right\}\right)
= \det\left(\rI -\chi^{1/2} \KAiry \chi^{1/2}\right)_{L^2(\{\alpha_1,\ldots,\alpha_m\}\times\realR)},
\end{equation}
where $\alpha_1<\cdots<\alpha_m$, $\chi$ is the indicator function defined by
\begin{equation}
\chi(\alpha_i,x) = \identity_{(\beta_i+\alpha^2_i,\infty)}(x) =\begin{dcases}
    1, & x> \beta_i+\alpha^2_i,\\
    0, & \text{elsewhere},
\end{dcases}
\end{equation}
and $\KAiry$ is the extended Airy kernel defined by
\begin{equation}
\KAiry(\alpha_i,x;\alpha_j,y) 
= \begin{dcases}
    \int_0^\infty e^{-z(\alpha_i-\alpha_j)} \Ai(x+z)\Ai(y+z)\rd z, & \text{ if }\alpha_i\ge \alpha_j,\\
    -\int_{-\infty}^0 e^{-z(\alpha_i-\alpha_j)} \Ai(x+z)\Ai(y+z)\rd z, & \text{ if }\alpha_i< \alpha_j.
  \end{dcases}
\end{equation}
Here $\Ai(x)$ denotes the Airy function
\begin{equation}
\label{eq:def_Airy}
\Ai(x) = \int_{\Gamma_{\rL}} e^{-\frac{1}{3}u^3 +xu} \frac{\rd u}{2\pi\rmi},
\end{equation}
where $\Gamma_\rL$ is a contour on the complex plane that goes from $\infty e^{-2\pi\rmi/3}$ to $\infty e^{2\pi\rmi/3}$.

When $m=1$, denote $\alpha_1=\alpha$ and $\beta_1=\beta$. The one-point distribution of the parabolic Airy process is given by 
\begin{equation}
\label{eq:GUE_marginal}
\prob\left(\Airy(\alpha)\le \beta\right) = \FGUE\left(\beta+\alpha^2\right),\quad \alpha,\beta\in\realR,
\end{equation}
where $\FGUE$ is the GUE Tracy-Widom distribution.

\bigskip

The parabolic Airy process can be viewed as a special case of the one-time marginals of the KPZ fixed point, which is a space time random field constructed by \cite{Matetski-Quastel-Remenik21}. Similarly to the parabolic Airy process, the KPZ fixed point is also conjectured and partially proved to be a universal limit of  the models in the Kardar-Parisi-Zhang universality class \cite{Matetski-Quastel-Remenik21,Dauvergne-Ortmann-Virag22,Quastel-Sarkar23,Virag20,Aggarwal-Corwin-Hegde24,Dauvergne-Zhang25}. The KPZ fixed point depends on the initial condition. Throughout this paper, we will only consider one special initial condition where the parabolic Airy process arises. Denote $\hKPZ (\alpha,\tau)$, with $(\alpha,\tau)\in\realR\times[0,\infty)$, the KPZ fixed point with the narrow-wedge initial condition $\hKPZ(\alpha,0) = -\infty \cdot \identity_{\realR\setminus\{0\}}(\alpha)$, i.e., $\hKPZ(\alpha,0)=0$ when $\alpha=0$, and $-\infty$ elsewhere. 

The KPZ fixed point $\hKPZ(\alpha,\tau)$ has the following well known $1:2:3$ scaling invariance
\begin{equation}
\hKPZ(\alpha,\tau) \stackrel{\mathrm{d}}{=} \epsilon^{-1/3} \hKPZ(\epsilon^{2/3}\alpha,\epsilon \tau),
\end{equation}
where $\stackrel{\mathrm{d}}{=}$ denotes the equation in distribution. Moreover, its one-time marginals are given by the (rescaled) parabolic Airy process
\begin{equation}
\hKPZ(\alpha,\tau_0) \stackrel{\mathrm{d}}{=} \tau_0^{1/3}\Airy \left(\tau_0^{-2/3}\alpha\right) 
\end{equation}
for any fixed $\tau_0>0$. Especially, for $\tau_{0} = 1$, we can write
\begin{equation}
\label{eq:relation_Airy2KPZ}
\Airy(\alpha) \stackrel{\mathrm{d}}{=} \hKPZ(\alpha,1).
\end{equation}

On the other hand, the exact formulas of the finite-dimensional distributions of the KPZ fixed point (with the narrow-wedge initial condition)  $\hKPZ(x,t)$ for general space time points were also obtained in \cite{Johansson-Rahman21,Liu22}. The formulas in these two papers are different, with both being very complicated. The formula in \cite{Johansson-Rahman21} is valid for points with different time parameters (and hence one still needs to use the continuity of the KPZ fixed point and take an extra limit to get a formula for the case when the time parameters are equal), while the formula in \cite{Liu22} holds for arbitrary space time points, which could include some equal time parameters. It implies that the equal-time multipoint distribution formula in \cite{Liu22} also gives the finite-dimensional distributions of the parabolic Airy process. However, a direct verification was missing. The motivation of this paper is to give a direct proof that the equal-time formula of \cite{Liu22} indeed matches the original multipoint distribution formula  \eqref{eq:def_AiryProcess} of the parabolic Airy process.

\bigskip

Let us introduce the formula of \cite{Liu22} below. Define an order $\prec$ in $\realR\times\realR_+$ as follows, here $\realR_+$ denotes the set of positive real numbers. $(\alpha,\tau)\prec (\alpha',\tau')$ if and only if one of the following two conditions are satisfied:
\begin{enumerate}[(1)]
\item $\tau<\tau'$, or
\item $\tau=\tau'$ and $\alpha<\alpha'$.
\end{enumerate}

\begin{thm}[\cite{Liu22}]
\label{thm:Liu}
Assume the points $(\alpha_i,\tau_i)\in\realR\times\realR_+$ are ordered $(\alpha_1,\tau_1)\prec  \cdots\prec(\alpha_m,\tau_m)$. We have
\begin{equation}
\label{eq:formula_Liu}
     \prob\left(  \bigcap_{i =1}^ m \left\{ \hKPZ (\alpha_i,\tau_i) \le  \beta_i \right\}\right)
     = \oint_0 \cdots \oint_0 \mathcal{D}_{(\alpha_1,\tau_1),\ldots,(\alpha_m,\tau_m)}(\bsz; \beta_1,\ldots, \beta_m) \prod_{i=1}^{m-1} \frac{\rd z_i}{2\pi\rmi z_i(1-z_i)},
\end{equation}
where $\bsz=(z_1,\cdots,z_{m-1})$, and $\mathcal{D}_{(\alpha_1,\tau_1),\ldots,(\alpha_m,\tau_m)}(\bsz; \beta_1,\ldots, \beta_m)$ is a function defined in Definition \ref{def:calD}. Moreover, the symbol $\oint_0$ denotes the integral along a small circle around the origin, with the counterclockwise orientation.
\end{thm}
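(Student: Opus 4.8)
Theorem~\ref{thm:Liu} is quoted from \cite{Liu22}; here I only sketch the strategy I would follow to prove it. The plan is to realize $\hKPZ$ with the narrow-wedge initial condition as a $1:2:3$ scaling limit of an exactly solvable lattice model — TASEP started from the step initial configuration, equivalently directed last passage percolation with independent exponential weights — for which an explicit finite-size multipoint distribution formula can be written down, and then to extract \eqref{eq:formula_Liu} by a steepest-descent asymptotic analysis. Concretely, I would start from the $n$-particle transition probabilities of TASEP (Sch\"utz's determinantal formula), impose the step initial data, and sum over the intermediate configurations at the successive times $t_1<\cdots<t_m$ by means of the Cauchy--Binet / Andreief identity — a generalization of which is one of the byproducts advertised in the abstract. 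Introducing one auxiliary integration variable $z_i$ for each consecutive pair of marginals to decouple the height constraints, the resulting expression can be organized as an $(m-1)$-fold integral over small circles around $0$ of a ratio of (Fredholm or finite) determinants whose kernels are built from the Bethe-ansatz data of TASEP; the weight $\prod_{i=1}^{m-1}\frac{\rd z_i}{2\pi\rmi\, z_i(1-z_i)}$ already appears at this level. (Alternatively one could import the periodic-TASEP formula of Baik and Liu and let the period tend to infinity.)

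The second half is the asymptotics. I would fix a large parameter $N$ and scale the data along the characteristic line of the model: the times proportionally to $\tau_i N$, the particle indices and sites displaced by order $\alpha_i N^{2/3}$ about it, and the heights by order $\beta_i N^{1/3}$, in accordance with the TASEP/LPP dictionary. Under this scaling the exponential factors in the integrand take the form $e^{N g(w)}$, with $g$ the same cubic-type function whose critical point $w_c$ produces the Airy function \eqref{eq:def_Airy}, and hence $\FGUE$, in the one-point analysis \eqref{eq:GUE_marginal}. Deforming each $w$-contour onto a steepest-descent path through $w_c$, setting $w = w_c + N^{-1/3}\zeta$, and rescaling, one checks term by term that the discrete ingredients converge: the exponential factors to the cubic exponentials defining $\Ai$ and its relatives, the discrete determinants to the Fredholm determinants entering $\mathcal{D}$ of Definition~\ref{def:calD}, while the $z_i$-contours can be kept fixed. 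Combined with uniform exponential-decay bounds, this lets one pass the limit through the Fredholm expansions and the $(m-1)$ contour integrals by dominated convergence, producing exactly the integrand $\mathcal{D}_{(\alpha_1,\tau_1),\ldots,(\alpha_m,\tau_m)}(\bsz;\beta_1,\ldots,\beta_m)$. Since the finite-dimensional distributions of the discrete model converge to those of $\hKPZ$, the identity obtained in the limit is \eqref{eq:formula_Liu}.

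I expect the main obstacle to be the uniformity required in the steepest-descent step: one needs tail estimates on the kernel entries and on the exponential factors that hold uniformly in all the $w$-variables and all the $z_i$-variables, and are strong enough to dominate the Fredholm expansions and the contour integrals simultaneously, so that the interchange of limit and integration is legitimate. A second delicate point, directly relevant to the present paper, is the regime of coinciding times $\tau_i=\tau_{i+1}$ permitted by the order $\prec$: there the factors associated with that pair degenerate, and one must verify that both the finite-$N$ formula and its scaling limit remain well defined — this equal-time situation is precisely the one whose consequences for the parabolic Airy process motivate this work.
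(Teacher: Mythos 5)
This paper does not prove Theorem~\ref{thm:Liu} at all: it is imported verbatim from \cite{Liu22} and used as the starting point for the computations in Sections~\ref{sec:Simplification} and~\ref{sec:Equivalence}, so there is no in-paper proof to compare your attempt against. Your sketch does correctly identify the strategy of the cited reference --- an exact finite-size multipoint formula for TASEP with step initial data, with one auxiliary variable $z_i$ per consecutive pair of observation points and the weight $\frac{\rd z_i}{2\pi\rmi\, z_i(1-z_i)}$ already present at the discrete level, followed by a $1:2:3$ steepest-descent limit onto the KPZ fixed point --- and you correctly flag the two genuinely delicate points (uniformity in the asymptotics, and the equal-time degeneration governed by the order $\prec$, which is exactly what makes the present paper's verification nontrivial). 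But be aware that what you have written is an outline, not a proof: the hard content of \cite{Liu22} is precisely the derivation of the finite-time multi-time formula itself (the naive Sch\"utz-plus-Cauchy--Binet summation over intermediate configurations does not close without substantial additional structure; the actual argument passes through the periodic-TASEP framework and its infinite-period limit, which you mention only as an alternative), and none of the determinantal identities, contour choices, or convergence estimates that constitute that proof are supplied here. As a citation-level justification of an imported theorem your proposal is adequate; as a proof it is not self-contained.
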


As a special case, when $\tau_1=\cdots=\tau_m=1$ and $\alpha_1<\cdots<\alpha_m$, the formula \eqref{eq:formula_Liu} is the same as $\prob\left(\bigcap_{i =1}^ m \left\{ \Airy(\alpha_i) \le  \beta_i \right\}\right)$ by the relation \eqref{eq:relation_Airy2KPZ} between the parabolic Airy process and the KPZ fixed point. Thus, we have the following formula for the parabolic Airy process
\begin{equation}
\label{eq:Airy_Liu}
\prob\left(\bigcap_{i=1}^m \left\{ \Airy (\alpha_i) \le \beta_i \right\}\right) = \oint_0\cdots\oint_0 \mathcal{D}_{(\alpha_1,1),\ldots,(\alpha_m,1)}(\bsz;\beta_1 ,\ldots,\beta_m ) \prod_{i=1}^{m-1} \frac{\rd z_i}{2\pi\rmi z_i(1-z_i)}
\end{equation}
for $\alpha_1<\cdots<\alpha_m$.

It turns out that \eqref{eq:Airy_Liu} can be simplified to a Fredholm determinant with kernel defined on a contour on a complex plane, see Proposition \ref{prop:alt_formula}. We will also show that the simplified Fredholm determinant formula matches the original formula \eqref{eq:def_AiryProcess} in the definition of the parabolic Airy process.

The proof relies on dedicated computations of the contour integrals appearing in the $\mathcal{D}$ function. While most of the computations are directly related to the formulas of the parabolic Airy process, we also use the following lemma in the proof, which is independent of the process and might be of its own interest. Note that when $m=1$, Lemma \ref{lm:Andreief_ext} is the well known Andreief's identity \cite{Andreief86}.

\begin{lm}[Generalized Andreief's Identity]
\label{lm:Andreief_ext}
Let $I_1,\ldots,I_m$ be a partition of $\{1,\ldots,n\}$. Denote by $\varphi$ the indicator function on $\cup_{k=1}^mI_k\times I_k$, i.e.,
\begin{equation}
\varphi(i,j) = \begin{dcases}
    1, & i,j\in I_k \text{ for some }k,\\
    0, & \text{elsewhere}.
\end{dcases}
\end{equation}

Let $X\subset\complexC$ be a measurable set and $\mu$ be a measure on $\Gamma$. Suppose $A_i(x)$ and $B_i(x)$, $1\le i\le n$, are two sequence of functions on $X$ such that $A_i(x)B_j(x)$, $1\le i,j \le n$, are all integrable functions with respect to $\rd \mu$. Then we have
\begin{equation}
\label{eq:Andeief_ext1}
\det \left[ \int_X A_i(x)B_j(x) \rd \mu(x)\right]_{i,j=1}^n
 = \frac{1}{\prod_{k=1}^m |I_k|!}  \int_{X^{n}}\det\left[A_i(x_j)\right]_{i,j=1}^n \det\left[B_i(x_j)\varphi(i,j)\right]_{i,j=1}^n \prod_{i=1}^n \rd \mu(x_i).
\end{equation}
Equivalently, we have
\begin{equation}
\label{eq:Andeief_ext2}
\det \left[ \int_X A_i(x)B_j(x) \rd \mu(x)\right]_{i,j=1}^n
 = \frac{1}{\prod_{k=1}^m |I_k|!}  \int_{X^{n}}\det\left[A_i(x_j)\right]_{i,j=1}^n \prod_{k=1}^m \det\left[B_i(x_j)\right]_{i,j\in I_k} \prod_{i=1}^n \rd \mu(x_i).
\end{equation}
\end{lm}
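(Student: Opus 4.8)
## Proof Proposal for the Generalized Andreief's Identity

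The plan is to adapt the classical Leibniz-expansion proof of Andreief's identity, the only new ingredient being that the indicator $\varphi$ truncates one of the two permutation sums to the subgroup of block-preserving permutations, whose cardinality is precisely the normalizing constant $\prod_{k=1}^m|I_k|!$. Before doing anything else I would dispatch the equivalence of \eqref{eq:Andeief_ext1} and \eqref{eq:Andeief_ext2}: reordering the index set $\{1,\dots,n\}$ so that the elements of $I_1$ come first, then those of $I_2$, and so on, simultaneously permutes the rows and the columns of the matrix $\left[B_i(x_j)\varphi(i,j)\right]_{i,j=1}^n$ and turns it into a block-diagonal matrix with diagonal blocks $\left[B_i(x_j)\right]_{i,j\in I_k}$; since one and the same permutation acts on rows and on columns, the determinant is unchanged, so $\det\left[B_i(x_j)\varphi(i,j)\right]_{i,j=1}^n=\prod_{k=1}^m\det\left[B_i(x_j)\right]_{i,j\in I_k}$. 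Thus it suffices to prove \eqref{eq:Andeief_ext1}.

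For \eqref{eq:Andeief_ext1} write $M_{ij}=\int_X A_i(x)B_j(x)\,\rd\mu(x)$, so the left-hand side is $\det\left[M_{ij}\right]_{i,j=1}^n$. Expand the two determinants on the right-hand side by the Leibniz formula, $\det\left[A_i(x_j)\right]=\sum_{\sigma\in S_n}\sgn(\sigma)\prod_i A_i(x_{\sigma(i)})$ and $\det\left[B_i(x_j)\varphi(i,j)\right]=\sum_{\tau\in S_n}\sgn(\tau)\prod_i B_i(x_{\tau(i)})\varphi(i,\tau(i))$. A term in the second sum is nonzero only if $\varphi(i,\tau(i))=1$ for every $i$, i.e.\ only if $\tau$ maps each block $I_k$ into itself; since $\tau$ is a bijection this forces $\tau(I_k)=I_k$ for all $k$, so the effective sum is over $\tau$ in the subgroup $S_{I_1}\times\cdots\times S_{I_m}\subset S_n$ (the product of the symmetric groups on the individual blocks), which has $\prod_{k=1}^m|I_k|!$ elements. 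Interchanging these finite sums with the $n$-fold integral — legitimate because by Fubini applied to absolute values the relevant integral factors as $\prod_{\ell=1}^n\int_X\bigl|A_{\sigma^{-1}(\ell)}(x)B_{\tau^{-1}(\ell)}(x)\bigr|\,\rd\mu(x)$, which is finite by the integrability hypothesis — and then using Fubini to separate the variables $x_1,\dots,x_n$, the summand indexed by $(\sigma,\tau)$ integrates to $\prod_{\ell=1}^n M_{\sigma^{-1}(\ell),\tau^{-1}(\ell)}$. Re-indexing by $j=\sigma^{-1}(\ell)$ and setting $\pi=\tau^{-1}\sigma$ turns this into $\prod_j M_{j,\pi(j)}$ with accompanying sign $\sgn(\sigma)\sgn(\tau)=\sgn(\tau^{-1}\sigma)=\sgn(\pi)$. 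For each fixed $\tau$ the map $\sigma\mapsto\tau^{-1}\sigma$ is a bijection of $S_n$, so summing over $\sigma$ produces $\det\left[M_{ij}\right]_{i,j=1}^n$ independently of $\tau$; summing the $\prod_{k=1}^m|I_k|!$ identical copies and dividing by that same factor yields exactly $\det\left[M_{ij}\right]_{i,j=1}^n$, as desired.

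The calculation is routine; the only points requiring care are the permutation bookkeeping (keeping track of which variable each $A_i$ and $B_i$ is evaluated at, and the identity $\sgn(\sigma)\sgn(\tau)=\sgn(\tau^{-1}\sigma)$) together with verifying that the stated integrability of the products $A_iB_j$ really does justify the interchange of the finite sums with the integral. I do not expect a genuine obstacle beyond this: the case $m=1$, where $S_{I_1}=S_n$ and the normalization is $1/n!$, recovers the classical Andreief identity verbatim.
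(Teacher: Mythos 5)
Your proof is correct and rests on the same two ingredients as the paper's: the observation that the $\varphi$-truncated determinant restricts the permutation sum to the block-preserving subgroup of cardinality $\prod_{k=1}^m|I_k|!$, and the block-diagonal identification of $\det\left[B_i(x_j)\varphi(i,j)\right]$ with $\prod_k\det\left[B_i(x_j)\right]_{i,j\in I_k}$. The only difference is stylistic: you expand both determinants on the right-hand side via Leibniz and recombine into $\det[M_{ij}]$, while the paper starts from the left-hand side, uses column multilinearity, and averages over the subgroup $S^*$ — the same computation read in the opposite direction.
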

\begin{proof}[Proof of Lemma \ref{lm:Andreief_ext}]
The idea of the proof is similar to that of the Andreief's identity. We denote $S^*$ the set of permutations of $\{1,\ldots,n\}$ that map $I_k$ to itself for all $1\le k\le m$, i.e., $\sigma\in S^*$ if and only if $\sigma$ is a bijection of $\{1,\ldots,n\}$ and $\varphi(i,\sigma_i)=1$ for all $i$. It is direct to count that
\begin{equation}
\label{eq:card_S*}
|S^*| = \prod_{k=1}^m |I_k|!.
\end{equation}

Now we write
\begin{equation}
\begin{split}
     \det \left[ \int_X A_i(x)B_j(x) \rd \mu(x)\right]_{i,j=1}^n
    &= \int_{X^n} \det\left[ A_i(x_j) B_j(x_j)\right]_{i,j=1}^n \prod_{i=1}^n \rd \mu(x_i)\\
    &= \int_{X^n} \det\left[ A_i (x_j)\right]_{i,j=1}^n \prod_{i=1}^n B_i(x_i)\prod_{i=1}^n \rd \mu(x_i)\\
    &=\frac{1}{|S^*|} \sum_{\sigma\in S^*} \int_{X^n}\det \left[ A_i (x_{\sigma_j})\right]_{i,j=1}^n \prod_{i=1}^n B_i(x_{\sigma_i})\prod_{i=1}^n \rd \mu(x_i)\\
    &=\frac{1}{|S^*|} \sum_{\sigma\in S^*} \int_{X^n} \sgn(\sigma) \det\left[ A_i (x_j)\right]_{i,j=1}^n \prod_{i=1}^n B_i(x_{\sigma_i})\prod_{i=1}^n \rd \mu(x_i)\\
    &=\frac{1}{|S^*|}  \int_{X^n}  \det\left[ A_i (x_j)\right]_{i,j=1}^n \sum_{\sigma\in S^*} \sgn(\sigma)\prod_{i=1}^n B_i(x_{\sigma_i})\prod_{i=1}^n \rd \mu(x_i),
\end{split}
\end{equation}
where $\sgn(\sigma)$ denotes the sign of the permutation $\sigma$. Note that the last summation gives the product of the determinants $\det[B_i(x_j)]_{i,j\in I_k}$. Together with \eqref{eq:card_S*} we obtain \eqref{eq:Andeief_ext2}. The other identity \eqref{eq:Andeief_ext1} also follows by noting that the determinant $\det [B_i(x_j)\varphi(i,j)]_{i,j=1}^n$ equals to the product of the determinants of the blocks along the diagonal line, since the matrix $[B_i(x_j)\varphi(i,j)]_{i,j=1}^n$ is a block matrix with off-diagonal blocks all equal to zero.
\end{proof}

The structure of this paper is as follows. In section \ref{sec:Simplification}, we will show that there is a way to simplify the formula \eqref{eq:Airy_Liu}. More explicitly, the $z$-integrals can be evaluated, where the resulting formula can be further simplified as a Fredholm determinant in Proposition \ref{prop:alt_formula}. In section \ref{sec:Equivalence}, we show that the new Fredholm determinant formula matches \eqref{eq:def_AiryProcess}. We remark that we actually obtained several different formulas for the parabolic Airy process through the computations in this paper, see the equation \eqref{eq:sum_hatD}, Lemmas \ref{lm:z_integral}, \ref{lm:hatD_version2}, Proposition \ref{prop:alt_formula}, and equations \eqref{eq:bfL_expansion}, \eqref{eq:final_bfL}. We put forward Proposition \ref{prop:alt_formula} as a representative since it is the simplest formula that only involves the contour integrals.

\section{Contour integral formula of the parabolic Airy process}
\label{sec:Simplification}

In this section, we will first state the main result, a contour integral formula of the multipoint distribution of the parabolic Airy process in section \ref{sec:alt_formula}. See Proposition \ref{prop:alt_formula}. This formula is derived from the multipoint distribution formula of the KPZ fixed point by \cite{Liu22}. We will introduce the formula of \cite{Liu22} in section \ref{sec:Liu_formula}, and prove Proposition \ref{prop:alt_formula} in section \ref{sec:proof_simplification}.

\subsection{Contour integral formula for the parabolic Airy process}
\label{sec:alt_formula}

In this subsection, we introduce a multipoint distribution formula of the parabolic Airy process, which is given by a Fredholm determinant expansion in terms of contour integrals.

Assume that $m\ge 1$ is a fixed integer, and $\alpha_1,\ldots,\alpha_m,\beta_1,\ldots,\beta_m$ are fixed real number. We also assume that $\alpha_1<\cdots<\alpha_m$.

Define the functions
\begin{equation}
\label{eq:def_f}
f_i(w) = e^{-\frac13 w^3 +\alpha_i w^2 + \beta_i w}, \quad 1\le i\le m,
\end{equation}
and
\begin{equation}
\label{eq:def_F}
\begin{split}
F_i(w) &= \begin{dcases}
        f_1(w), & i=1,\\
          f_i(w)/f_{i-1}(w), &2\le i\le m
          \end{dcases}
          \\
        &
        =\begin{dcases}
        e^{-\frac13 w^3 +\alpha_1w^2 +\beta_1 w}, & i=1,\\
        e^{(\alpha_i -\alpha_{i-1})w^2 +(\beta_i -\beta_{i-1})w}, & 2\le i\le m.
        \end{dcases}
        \end{split}
\end{equation}

Denote $\complexC_\rL:=\{z\in\complexC: \Re(z)<0\}$ the left half of the complex plane. Define a function $\bfK: (\{1,\ldots,m\}\times \complexC_\rL)^2\to\complexC$ as follows,
\begin{equation}
\label{eq:def_bfK}
    \bfK(i,z; j,u) = \begin{dcases}
                       \int  \frac{\rd v}{2\pi\rmi} \frac{f_1(z)}{f_1(v)} \frac{1}{(z-v)(v-u)} , & i=1,\\
                       \int  \frac{\rd v}{2\pi\rmi} \prod_{\ell=2}^{i} \int  \frac{\rd u_\ell}{2\pi\rmi} \frac{f_1(z)}{f_i(v)}\frac{ \prod_{\ell=2}^{i}F_\ell(u_\ell)}{(z-u_2)\cdot \prod_{\ell=2}^{i-1}(u_\ell-u_{\ell+1})\cdot (u_i -v)(v-u)}, & 2\le i\le m,
                     \end{dcases}
\end{equation}
where the integration contour of $v$ is any contour on the right half plane that goes from $\infty e^{-\pi\rmi/5}$ to $\infty e^{\pi\rmi/5}$ \footnote{One could choose the angle of the $v$-contour to be $\pm\pi/3$, which is a standard choice for functions with a cubic exponent. We choose $\pm\pi/5$ here to be consistent with the rest of the paper, where we need to ensure not only $1/f_i(v)$, but also $1/F_i(v)$ which has a square exponent, to decay along the $v$-contour.}, and the integration contour of $u_\ell$ is a contour between $-\infty$ and $z$ on the left half plane that goes from $\infty e^{-2\pi\rmi/3}$ to $\infty e^{2\pi\rmi/3}$. Moreover, the integration contours of $u_2,\ldots,u_i$ are disjoint and ordered from right to left. See Figure \ref{fig:contours_bfK} for an illustration of the contours. Note that our choices of the integration contours ensures the integrals are absolutely convergent and hence well-defined.

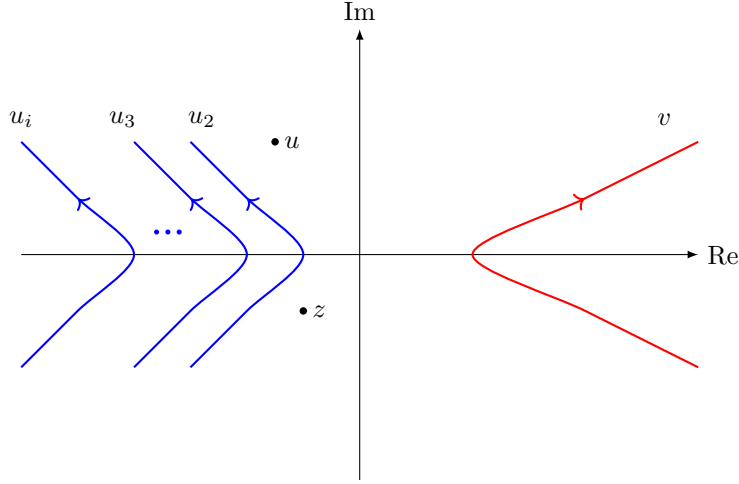
\begin{figure}
    \centering
    \begin{tikzpicture}[scale=1.5, decoration={
  markings,
  mark=at position 0.75 with {\arrow{>}}}
  ] 
  \draw[-latex] (-3,0) -- (3,0) node[right] {$\Re$};
  \draw[-latex] (0,-2) -- (0,2) node[above] {$\Im$};

  \draw[postaction={decorate},blue,thick] plot[smooth] coordinates {(-3,-1) (-2.5,-0.5) (-2,0) (-2.5,0.5) (-3,1)};
  \draw[postaction={decorate},blue,thick] plot[smooth] coordinates {(-2,-1) (-1.5,-0.5) (-1,0) (-1.5,0.5) (-2,1)};
  \draw[postaction={decorate},blue,thick] plot[smooth] coordinates {(-1.5,-1) (-1,-0.5) (-0.5,0) (-1,0.5) (-1.5,1)};

  \draw[postaction={decorate},red,thick] plot[smooth] coordinates {(3,-1) (2,-0.5) (1,0) (2,0.5) (3,1)};

  \filldraw (-0.75,1) circle (0.8pt) node[right] {$u$};
  \filldraw (-0.5,-0.5) circle (0.8pt) node[right] {$z$};
  \filldraw[blue] (-1.8,0.2) circle (0.5pt);
 \filldraw[blue] (-1.7,0.2) circle (0.5pt);
  \filldraw[blue] (-1.6,0.2) circle (0.5pt);
  \node at (-3,1.2) {$u_i$};
  \node at (-2.1,1.2) {$u_3$};
  \node at (-1.4,1.2) {$u_2$};
  \node at (2.7,1.2) {$v$};
\end{tikzpicture}
    \caption{The integration contours in the definition of $\bfK$.}
    \label{fig:contours_bfK}
\end{figure}

$\bfK$ can also be viewed as an operator on $L^2(\{1,\ldots,m\}\times \Gamma_\rL)$ if $\Gamma_\rL$ is a contour on the left half plane such that $f_1$ decays sufficiently fast along the contour. More explicitly, we choose
\begin{equation}
\Gamma_\rL =\left\{-1+ r e^{\pm 2\pi\rmi/3}: r\ge 0\right\},
\end{equation}
with the orientation  from $\infty e^{-2\pi\rmi/3}$ to $\infty e^{2\pi\rmi/3}$. With this choice, we define the Fredholm determinant $\det(\rI +\bfK)$ by its series expansion
\begin{equation}
\label{eq:Fredholm_series}
    \det (\rI +\bfK) = \sum_{k=0}^\infty \frac{1}{k!} \sum_{1\le \ell_1,\ldots,\ell_k\le m} \int_{\Gamma_\rL}\cdots\int_{\Gamma_\rL} \det\begin{bmatrix}
        \bfK(\ell_i,u_i;\ell_j,u_j) 
    \end{bmatrix}_{i,j=1}^k \frac{\rd u_1}{2\pi\rmi}\cdots\frac{\rd u_k}{2\pi\rmi},
\end{equation}
or equivalently by counting the number of $i$'s appearing in the first index in the above summation,
\begin{equation}
\label{eq:Fredholm_series2}
    \det (\rI +\bfK) =  \sum_{k_1,\ldots,k_m\ge 0} \frac{1}{k_1!\cdots k_m!}  \prod_{i=1}^m \prod_{\hat\ell_i=1}^{k_m}\int_{\Gamma_\rL} \frac{\rd u_{\hat \ell_i}^{(i)}}{2\pi\rmi} \det\begin{bmatrix}
        \bfK( i,u_{\hat \ell_i}^{(i)};j,u_{\hat\ell_j}^{(j)}) 
    \end{bmatrix}_{(i,\hat\ell_i),(j,\hat\ell_j)} ,
\end{equation}
where the row and column indices of the determinant above are chosen from the set $\{(i,\hat\ell_i): 1\le i\le m, 1\le \hat\ell_i\le k_i\}$. In the two formulas above, and other similar formulas in the rest of this paper, we view the empty product, integral, or determinant as $1$. Thus, the first term in both expansions is $1$. Moreover, it is standard to use the Hadamard's inequality to verify that the above multiple integral and the summation are absolutely convergent due to the fact that $\bfK$ decays super-exponentially fast along the contour $\Gamma_\rL$.

\begin{prop}
\label{prop:alt_formula}
Recall $\alpha_1<\cdots<\alpha_m$. We have the following formula for the $m$-point distribution of the parabolic Airy process $\Airy(\alpha)$
\begin{equation}
\prob\left(\bigcap_{i=1}^m \left\{ \Airy(\alpha_i) \le \beta_i \right\} \right) =\det(\rI + \bfK),
\end{equation}
where the Fredholm determinant $\det(\rI+\bfK)$ is defined in \eqref{eq:Fredholm_series}.
\end{prop}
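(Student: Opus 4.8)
The plan is to start from the formula \eqref{eq:Airy_Liu} and carry out the $z$-integrals explicitly, which is what the announced Lemmas in Section~\ref{sec:Simplification} are set up to do. First I would unfold the definition of $\mathcal{D}_{(\alpha_1,1),\ldots,(\alpha_m,1)}(\bsz;\beta_1,\ldots,\beta_m)$ from Definition~\ref{def:calD} (which appears later in the paper), specialized to the equal-time case $\tau_1=\cdots=\tau_m=1$. In this specialization the time-dependent factors collapse and the $\mathcal{D}$ function becomes a product of finitely many contour integrals against a polynomial/rational expression in the $z_i$; the key simplification is that each $z_i$ appears in a controlled way so that the contour integral $\oint_0 \frac{\rd z_i}{2\pi\rmi z_i(1-z_i)}$ can be computed by residues. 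I expect the natural intermediate object is a sum over compositions $k_1,\ldots,k_m$ (equivalently, over how the "particles" are distributed among the $m$ points), matching \eqref{eq:Fredholm_series2}; this is the content of the equation \eqref{eq:sum_hatD} and Lemmas~\ref{lm:z_integral} and \ref{lm:hatD_version2} referenced in the introduction.

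The second main step is to recognize the resulting summation as a Fredholm determinant. After the $z$-integrals are done, one is left with a sum of multiple integrals over contours (the $u$-type and $v$-type contours appearing in the definition of $\bfK$ in \eqref{eq:def_bfK}). The strategy here is to apply the Generalized Andreief's Identity, Lemma~\ref{lm:Andreief_ext}: the block structure indexed by the partition $I_1,\ldots,I_m$ of $\{1,\ldots,n\}$ is exactly what is needed to group the integration variables according to which of the $m$ points $\alpha_i$ they belong to, and the factor $\prod_k |I_k|!$ reproduces the $\frac{1}{k_1!\cdots k_m!}$ in \eqref{eq:Fredholm_series2}. Concretely, I would arrange the integrand so that one determinant collects the $A_i(x_j)$-type factors and the block determinants collect the $B_i(x_j)$-type factors, then read off $\bfK(i,z;j,u)$ as the entry $\int A_i B_j \,\rd\mu$. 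Verifying that the contours and the decay of $f_1$ along $\Gamma_\rL$ make every integral and the full series absolutely convergent — so that Fubini and the Andreief rearrangement are legitimate — is a routine but necessary bookkeeping step, handled by Hadamard's inequality as noted after \eqref{eq:Fredholm_series2}.

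The step I expect to be the main obstacle is the explicit evaluation of the $z$-integrals and the careful tracking of which residues survive. The $\mathcal{D}$ function in \cite{Liu22} is genuinely complicated: it involves a Fredholm-type determinant whose entries are themselves contour integrals with the $z_i$ entering through factors like $(1-z_i)$, exponential prefactors, and the kernels, so extracting the residue at $z_i=0$ (and checking there is no contribution from $z_i=1$, or correctly accounting for it) requires understanding the analytic structure in each $z_i$ separately. One has to check that the poles in $z_i$ inside the small contour are simple and located only at $0$, identify the correct combinatorial reorganization when the residue is taken, and confirm that the dependence on the remaining variables factorizes in the way needed for the next step. This is where the "dedicated computations" mentioned in the introduction live, and where the equal-time specialization is essential — it is what makes the $z_i$-dependence simple enough to integrate in closed form.

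Finally, I would assemble the pieces: \eqref{eq:Airy_Liu} equals (via the $z$-integral evaluation) the composition-sum \eqref{eq:sum_hatD}, which equals (via Lemma~\ref{lm:Andreief_ext}) the series \eqref{eq:Fredholm_series2}, which by definition is $\det(\rI+\bfK)$. A small additional check is that the contour choices made along the way (the $\pm\pi/5$ angle for the $v$-contour, the nested ordered $u_\ell$-contours on the left half-plane, and the specific $\Gamma_\rL$) are mutually compatible and can be deformed into the ones fixed in the statement of Proposition~\ref{prop:alt_formula} without crossing singularities; the footnote about needing $1/F_i(v)$ to decay signals that this contour compatibility is the one subtlety here. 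The equivalence with the original extended-Airy-kernel formula \eqref{eq:def_AiryProcess} is then deferred to Section~\ref{sec:Equivalence} and is not needed for this Proposition.
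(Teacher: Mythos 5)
Your proposal follows essentially the same route as the paper: evaluate the $z_i$-integrals as residues at the origin after showing the unwanted contour contributions vanish, collapse the resulting sum into one indexed by the differences $k_i=n_i-n_{i+1}$ (the paper's Lemmas \ref{lm:z_integral} and \ref{lm:hatD_version2}), and then apply the Generalized Andreief's Identity of Lemma \ref{lm:Andreief_ext} to recognize the expansion \eqref{eq:Fredholm_series2} of $\det(\rI+\bfK)$. You correctly identify the residue bookkeeping in the $z$- and $v$-integrals as the main technical burden, though your write-up is a roadmap that defers rather than executes those computations.
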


When $m=1$, we set $\alpha_1=\alpha$ and $\beta_1=\beta$. The operator $\bfK$ is defined on $L^2(\Gamma_\rL, \rd z/2\pi\rmi)$ with kernel
\begin{equation}
\bfK(u;u') =\int_{\Gamma_\rR} \frac{e^{-\frac13 u^3 + \alpha u^2 +\beta u}}{e^{-\frac13v^3 +\alpha v^2 +\beta v}(u-v)(v-u')} \frac{\rd v}{2\pi\rmi} = \int_0^\infty L_1(u;\lambda)L_2(\lambda;u')\rd \lambda,
\end{equation}
where $\Gamma_\rR$ is any contour on the right half plane that goes from $\infty e^{-\pi\rmi/5}$ to $\infty e^{\pi\rmi/5}$,
\begin{equation}
\begin{split}
L_1(u;\lambda) & = \int_{\Gamma_\rR} \frac{e^{-\frac13u^3 +\alpha u^2 +\beta u}}{e^{-\frac13 v^3 +\alpha v^2 +(\beta+\lambda)v}} \frac{1}{u-v} \frac{\rd v}{2\pi\rmi}\\
&= -\int_0^\infty \int_{\Gamma_\rR} \frac{e^{-\frac13u^3 +\alpha u^2 +(\beta+\lambda') u}}{e^{-\frac13 v^3 +\alpha v^2 +(\beta+\lambda+\lambda')v}}  \frac{\rd v}{2\pi\rmi} \rd \lambda',
\end{split}
\end{equation}
and
\begin{equation}
L_2(\lambda;u') = e^{\lambda u'}. 
\end{equation}

Note that 
\begin{equation}
\begin{split}
(L_2L_1)(\lambda,\lambda'') &= -\int_0^\infty \int_{\Gamma_{\rL}} e^{-\frac13u^3 +\alpha u^2 +(\beta+\lambda+\lambda') u} \frac{\rd u}{2\pi\rmi} \int_{\Gamma_\rR}  e^{\frac13 v^3 -\alpha v^2 -(\beta+\lambda''+\lambda')v}  \frac{\rd v}{2\pi\rmi}   \rd \lambda'\\
&= -e^{\alpha(\lambda-\lambda'')}\int_0^\infty \Ai(\beta+\alpha^2+\lambda+\lambda')\Ai(\beta+\alpha^2+\lambda''+\lambda')\rd\lambda',
\end{split}
\end{equation}
which is the conjugated Airy kernel (up to the sign and parameter shift). Therefore, we obtain
\begin{equation}
\prob\left(\Airy(\alpha)\le\beta\right) = \det(\rI+\bfK)  =\det(\rI+L_2L_1) = \FGUE(\beta+\alpha^2),
\end{equation}
which recovers \eqref{eq:GUE_marginal}.

\subsection{Multipoint distribution formula of the KPZ fixed point}
\label{sec:Liu_formula}

In this subsection, we will introduce the multipoint distribution formula of the KPZ fixed point as discussed in Theorem \ref{thm:Liu}.

Assume that $(\alpha_i,\tau_i)$, $1\le i\le m$, are $m$ points in $\realR\times\realR_+$ satisfying $(\alpha_1,\tau_1)\prec \cdots \prec (\alpha_m,\tau_m)$. Then, the joint distribution of $\hKPZ (\alpha_i,\tau_i)$, $1\le i\le m$, is given by
\begin{equation}
\label{eq:Liu_formula}
     \prob\left(  \bigcap_{i =1}^ m \left\{ \hKPZ (\alpha_i,\tau_i) \le  \beta_i \right\}\right)
     = \oint_0 \cdots \oint_0 \mathcal{D}_{(\alpha_1,\tau_1),\ldots,(\alpha_m,\tau_m)}(\bsz; \beta_1,\ldots, \beta_m) \prod_{i=1}^{m-1} \frac{\rd z_i}{2\pi\rmi z_i(1-z_i)},
\end{equation}
where $\bsz =(z_1,\ldots,z_{m-1})$, and $\mathcal{D}_{(\alpha_1,\tau_1),\ldots,(\alpha_m,\tau_m)}(\bsz; \beta_1,\ldots, \beta_m)$ is a function explicitly defined in \cite{Liu22}. Below in this subsection we suppress the parameters and write $\mathcal{D}=\mathcal{D}_{(\alpha_1,\tau_1),\ldots,(\alpha_m,\tau_m)}(\bsz; \beta_1,\ldots, \beta_m)$ for notation simplification when there is no confusion.

There are two equivalent formulas of $\mathcal{D}$ in \cite{Liu22}, one in the form of a Fredholm determinant of an operator, and the other in the form of a series expansion (of the Fredholm determinant). We will introduce the expansion formula, since we will not use the operator form of $\mathcal{D}$ in this paper.

\bigskip

We first define the functions
\begin{equation}
f_i(w) = e^{-\frac13\tau_i w^3 +\alpha_i w^2 +\beta_i w},\quad 1\le i\le m,
\end{equation}
and
\begin{equation}
\begin{split}
F_i(w) & = \begin{dcases}
                f_1(w), & i=1,\\
                f_i(w)/f_{i-1}(w), &2\le i\le m,
           \end{dcases}\\
       & = \begin{dcases}
                e^{-\frac13 \tau_1 w^3 + \alpha_1 w^2 +\beta_1 w}, & i=1,\\
                e^{-\frac13 (\tau_i -\tau_{i-1})w^3 + (\alpha_i -\alpha_{i-1})w^2 +(\beta_i-\beta_{i-1})w}, & 2\le i\le m.
           \end{dcases}
\end{split}
\end{equation}
These notations are consistent with \eqref{eq:def_f} and \eqref{eq:def_F} when $\tau_1=\cdots=\tau_m=1$.

We also introduce the notation of Cauchy determinant
\begin{equation}
\rC(W;\tilde W) := \det \left[ \frac{1}{w_i-\tilde w_{j}}\right]_{i,j=1}^n = (-1)^{n(n-1)/2}\frac{\prod_{1\le i<j\le n} (w_j-w_i)(\tilde w_j-\tilde w_i)}{\prod_{i=1}^n\prod_{j=1}^n (w_j-\tilde w_i)}
\end{equation}
for any $n\ge 1$, and any vectors $W=(w_1,\ldots,w_n), \tilde W=(\tilde w_1,\ldots,\tilde w_n)\in\complexC^n$. Note that in the definition, the dimensions of $W$ and $\tilde W$ have to be the same. 

Another related notation we need is the following operation $\sqcup$ of two vectors. If $W=(w_1,\ldots,w_n)\in\complexC^n$ and $W'=(w'_1,\ldots,w'_{n'})\in\complexC^{n'}$ are two vectors, define their conjunction to be
\begin{equation}
W\sqcup W' = (w_1,\ldots,w_n,w'_1,\ldots,w'_{n'})\in\complexC^{n+n'}.
\end{equation}

Finally, we introduce $4m-2$ contours as follows. Let $\Gamma_{m,\rL}^\inn,\ldots,\Gamma_{2,\rL}^\inn,\Gamma_{1,\rL},\Gamma_{2,\rL}^\out,\ldots,\Gamma_{m,\rL}^\out$, ordered from left to right, be $2m-1$ contours on the left half of the complex plane. Each of them goes from $\infty e^{-2\pi\rmi/3}$ to $\infty e^{2\pi\rmi/3}$. Moreover, let $\Gamma_{m,\rR}^\inn,\ldots,\Gamma_{2,\rR}^\inn,\Gamma_{1,\rR},\Gamma_{2,\rR}^\out,\ldots,\Gamma_{m,\rR}^\out$, ordered from right to left, be $2m-1$ contours on the right half of the complex plane. Each of them goes from $\infty e^{-\pi\rmi/5}$ to $\infty e^{\pi\rmi/5}$. Note that the angles of these right contours are $\pm\pi/5$ instead of $\pm\pi/3$ \footnote{When the time parameters are strictly ordered $\tau_1<\cdots<\tau_m$, we could choose the angles of the right contours to be $\pm\pi/3$. When the time parameters are not necessarily strictly different, we need to bend the right contours to ensure the convergence of the integrals in the formula. See \cite[discussions after Definition 2.25]{Liu22}, or more recently \cite[Proposition 3.1]{Liu-Zhang25}.}.  See Figure \ref{fig:Gamma_contours} for an illustration of the contours.

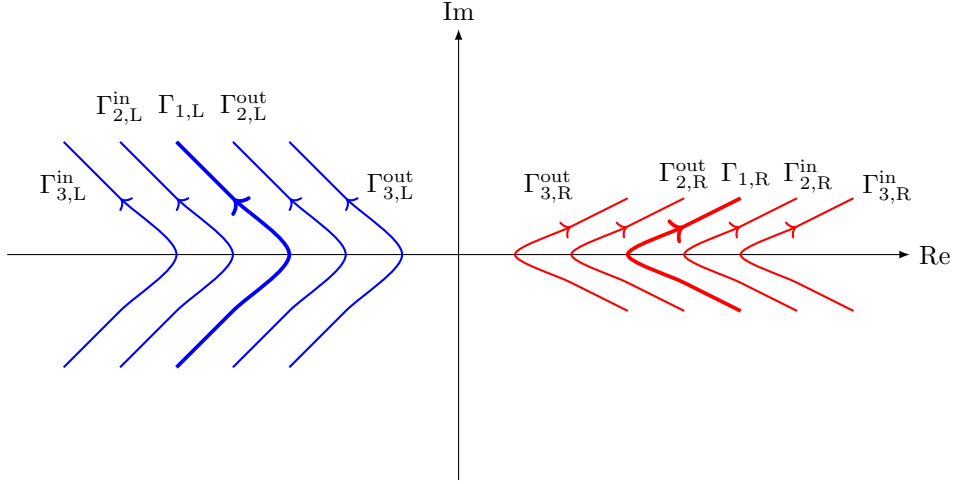
\begin{figure}
    \centering
    \begin{tikzpicture}[scale=1.5, decoration={
  markings,
  mark=at position 0.75 with {\arrow{>}}}
  ] 
  \draw[-latex] (-4,0) -- (4,0) node[right] {$\Re$};
  \draw[-latex] (0,-2) -- (0,2) node[above] {$\Im$};

    \draw[postaction={decorate},blue,thick] plot[smooth] coordinates {(-3.5,-1) (-3,-0.5) (-2.5,0) (-3,0.5) (-3.5,1)};
  \draw[postaction={decorate},blue,thick] plot[smooth] coordinates {(-3,-1) (-2.5,-0.5) (-2,0) (-2.5,0.5) (-3,1)};
  \draw[postaction={decorate},blue,line width = 0.5mm] plot[smooth] coordinates {(-2.5,-1) (-2,-0.5) (-1.5,0) (-2,0.5) (-2.5,1)};
  \draw[postaction={decorate},blue,thick] plot[smooth] coordinates {(-2,-1) (-1.5,-0.5) (-1,0) (-1.5,0.5) (-2,1)};
  \draw[postaction={decorate},blue,thick] plot[smooth] coordinates {(-1.5,-1) (-1,-0.5) (-0.5,0) (-1,0.5) (-1.5,1)};

  \draw[postaction={decorate},red,thick] plot[smooth] coordinates {(1.5,-0.5) (1,-0.25) (0.5,0) (1,0.25) (1.5,0.5)};
  \draw[postaction={decorate},red,thick] plot[smooth] coordinates {(2,-0.5) (1.5,-0.25) (1,0) (1.5,0.25) (2,0.5)};
  \draw[postaction={decorate},red,line width =0.5mm] plot[smooth] coordinates {(2.5,-0.5) (2,-0.25) (1.5,0) (2,0.25) (2.5,0.5)};
   \draw[postaction={decorate},red,thick] plot[smooth] coordinates {(3,-0.5) (2.5,-0.25) (2,0) (2.5,0.25) (3,0.5)};
\draw[postaction={decorate},red,thick] plot[smooth] coordinates {(3.5,-0.5) (3,-0.25) (2.5,0) (3,0.25) (3.5,0.5)};

  \node at (-3.5,0.6) {$\Gamma_{3,\rL}^{\inn}$};
  \node at (-3,1.3) {$\Gamma_{2,\rL}^{\inn}$};
  \node at (-2.45,1.3) {$\Gamma_{1,\rL}$};
  \node at (-1.9,1.3) {$\Gamma_{2,\rL}^\out$};
  \node at (-0.6,0.6) {$\Gamma_{3,\rL}^\out$};

  \node at (3.8,0.6) {$\Gamma_{3,\rR}^{\inn}$};
  \node at (2.55,0.7) {$\Gamma_{1,\rR}$};
  \node at (2,0.7) {$\Gamma_{2,\rR}^\out$};
  \node at (3.1,0.7) {$\Gamma_{2,\rR}^\inn$};
  \node at (0.8,0.6) {$\Gamma_{3,\rR}^\out$};
\end{tikzpicture}
    \caption{Illustration of the $\Gamma$-contours when $m=3$. The two contours $\Gamma_{1,\rL}$ and $\Gamma_{1,\rR}$ are thickened.}
    \label{fig:Gamma_contours}
\end{figure}

Now we are ready to define the integrand $\mathcal{D}$.
\begin{defn}
\label{def:calD}
The function $\mathcal{D}=\mathcal{D}_{(\alpha_1,\tau_1),\ldots,(\alpha_m,\tau_m)}(\bsz; \beta_1,\ldots, \beta_m)$ is defined by the following series expansion
\begin{equation}
\label{eq:D_sum}
\mathcal{D}= \sum_{n_1,\ldots,n_m\ge 0} \frac{1}{(n_1!\cdots n_m!)^2} \mathcal{D}_{\bsn},
\end{equation}
where $\bsn=(n_1,\cdots,n_m)$, and 
\begin{equation}
\label{eq:D_individual}
\begin{split}
\mathcal{D}_\bsn  &= \mathcal{D}_{\bsn;(\alpha_1,\tau_1),\ldots,(\alpha_m,\tau_m)}(\bsz; \beta_1,\ldots, \beta_m)\\
&:= \prod_{i=1}^{m-1} (1-z_i)^{n_i} (1-z_i^{-1})^{n_{i+1}}\\
&\quad \cdot \prod_{i=2}^m \prod_{\ell_i=1}^{n_i} 
     \left(\frac{1}{1-z_{i-1}} \int_{\Gamma_{i,\rL}^\inn} \frac{\rd u_{\ell_i}^{(i)}}{2\pi\rmi} 
     -\frac{z_{i-1}}{1-z_{i-1}} \int_{\Gamma_{i,\rL}^\out} \frac{\rd u_{\ell_i}^{(i)}}{2\pi\rmi} \right)
     \prod_{\ell_1=1}^{n_1} \int_{\Gamma_{1,\rL}} \frac{\rd u_{\ell_1}^{(1)}}{2\pi\rmi}\\
     &\quad \cdot \prod_{i=2}^m \prod_{\ell_i=1}^{n_i} 
     \left(\frac{1}{1-z_{i-1}} \int_{\Gamma_{i,\rR}^\inn} \frac{\rd v_{\ell_i}^{(i)}}{2\pi\rmi} 
     -\frac{z_{i-1}}{1-z_{i-1}} \int_{\Gamma_{i,\rR}^\out} \frac{\rd v_{\ell_i}^{(i)}}{2\pi\rmi} \right)
     \prod_{\ell_1=1}^{n_1} \int_{\Gamma_{1,\rR}} \frac{\rd v_{\ell_1}^{(1)}}{2\pi\rmi}\\
     &\qquad \rC(V^{(1)};U^{(1)}) \cdot \prod_{i=1}^m \rC(U^{(i)}\sqcup V^{(i+1)}; V^{(i)}\sqcup U^{(i+1)}) \cdot  \prod_{i=1}^m \prod_{\ell_i=1}^{n_i} \frac{F_i (u_{\ell_i}^{(i)})}{F_i (v_{\ell_i}^{(i)})},
\end{split}
\end{equation}
where the vectors $U^{(i)} = (u_1^{(i)},\cdots, u_{n_i}^{(i)})$, $V^{(i)} = (v_1^{(i)},\cdots, v_{n_i}^{(i)})$ for $1\le i \le m$, and $U^{(m+1)}$, $V^{(m+1)}$ are both empty vectors.
\end{defn}

It is direct to check that $F_i(u)$ (and $1/F_i(v)$, respectively) decays super-exponentially fast as $u\in\Gamma_{i,\rL}^{\inn}\cup \Gamma_{i,\rL}^\out$ ($v\in \Gamma_{i,\rR}^{\inn}\cup \Gamma_{i,\rR}^\out$, respectively) goes to infinity when $i\ge 2$, or $u\in\Gamma_{1,\rL}$ ($v\in \Gamma_{1,\rR}$, respectively) goes to infinity when $i=1$. One can show that the integrals in $\mathcal{D}_{\bsn}$ are absolutely convergent, and the summation in \eqref{eq:D_sum} is absolutely convergent uniformly for $(z_1,\ldots,z_{m-1})$ as long as all the norms $|z_i|$, $1\le i\le m$, stay within a compact set of $(0,1)$.  Similar expansions have been discussed in \cite{Liu22,Liu22c,Liu-Wang24,Liu-Zhang25}. We refer the readers to the discussions after Definition 2.2 in \cite{Liu-Zhang25} for more details of a similar convergence.

\subsection{Proof of Proposition \ref{prop:alt_formula} using Theorem \ref{thm:Liu}}
\label{sec:proof_simplification}

In this subsection, we consider a special case of the formula \eqref{eq:Liu_formula} when $\tau_1=\cdots=\tau_m=1$ and $\alpha_1<\cdots<\alpha_m$. After simplifying and rewriting the formula, we will show Proposition \ref{prop:alt_formula}.

Recall the discussions at the end of the previous subsection. We can change the order of the integration and the summation. Consider the $z$-integrals inside the summation \eqref{eq:D_sum} and denote
\begin{equation}
\hat{\mathcal{D}}_{\bsn}  = \hat{\mathcal{D}}_{\bsn;\alpha_1,\ldots,\alpha_m}(\beta_1,\ldots,\beta_m):= \oint_0\cdots \oint_0 \mathcal{D}_{\bsn;(\alpha_1,1),\ldots,(\alpha_m,1)}(\bsz;\beta_1,\ldots,\beta_m) \prod_{i=1}^{m-1}\frac{\rd z_i}{2\pi\rmi z_i(1-z_i)}.
\end{equation}
Then we have
\begin{equation}
\label{eq:sum_hatD}
\prob\left(\bigcap_{i=1}^m \left\{\Airy(\alpha_i)\le \beta_i \right\}\right) = \sum_{n_1,\ldots,n_m\ge 0} \frac{1}{(n_1!\cdots n_m!)^2} \hat{\mathcal{D}}_{\bsn}.
\end{equation}

Below in this subsection, we will first evaluate the $z$-integrals by simplifying the $v$-integrals in $\hat{\mathcal{D}}_{\bsn}$. This gives Lemma \ref{lm:z_integral}. We further re-organize the $u$-integrals in the formula and give a more compact formula for $\hat{\mathcal{D}}_{\bsn}$ in Lemma \ref{lm:hatD_version2}. Finally, we will show the summation in \eqref{eq:sum_hatD} is the same as $\det(\rI+ \bfK)$ in Proposition \ref{prop:alt_formula}.

\bigskip

For notation convenience, we set 
\begin{equation}
    \Gamma_{1,\rL}^\inn =\Gamma_{1,\rL},\quad \Gamma_{1,\rR}^\out =\Gamma_{1,\rR}
\end{equation}
throughout this subsection.

\begin{lm}
\label{lm:z_integral}
    Recall that $\alpha_1<\cdots<\alpha_m$. 
    \begin{enumerate}[(i)]
    \item We have
    \begin{equation}
    \label{eq:hat_D_expression}
    \begin{split}       
       \hat{\mathcal{D}}_{\bsn} &= \prod_{i=1}^m \prod_{\ell_i=1}^{n_i}  \left(\int_{\Gamma_{i,\rL}^\inn} \frac{\rd u_{\ell_i}^{(i)}}{2\pi\rmi}  \int_{\Gamma_{i,\rR}^\out} \frac{\rd v_{\ell_i}^{(i)}}{2\pi\rmi}  \right)\\
        &\quad
        \rC(V^{(1)};U^{(1)}) \cdot \prod_{i=1}^m \rC(U^{(i)}\sqcup V^{(i+1)}; V^{(i)}\sqcup U^{(i+1)}) \cdot  \prod_{i=1}^m \prod_{\ell_i=1}^{n_i} \frac{F_i (u_{\ell_i}^{(i)})}{F_i (v_{\ell_i}^{(i)})},
    \end{split}
    \end{equation}
    where all the notations are the same as in the previous subsection. 

    \item  If $n_1\ge \cdots\ge n_m$, we have
    \begin{equation}
    \label{eq:hat_D_simplification}
    \begin{split}
      \hat{\mathcal{D}}_{\bsn} 
      & = \prod_{i=1}^{m-1}\frac{n_i!}{k_i!}   \prod_{i=1}^m \prod_{\ell_i=1}^{n_i}   \int_{\Gamma_{i,\rL}^\inn} \frac{\rd u_{\ell_i}^{(i)}}{2\pi\rmi}\cdot \prod_{i=1}^m \prod_{\hat\ell_i=1}^{k_i} \int_{\Gamma_{1,\rR}} \frac{\rd \hat v_{\ell_i}^{(i)}}{2\pi\rmi} \\
      &\quad \rC(\hat V^{(m)}\sqcup \cdots \sqcup \hat V^{(1)};U^{(1)}) \cdot \prod_{i=1}^m \rC(U^{(i)}; U^{(i+1)}\sqcup \hat V^{(i)}) \cdot \prod_{i=1}^m \frac{\prod_{\ell_i=1}^{n_i}F_i(u_{\ell_i}^{(i)})}{\prod_{\hat\ell_i=1}^{k_i} f_i(\hat v_{\hat\ell_i}^{(i)})},
    \end{split}
    \end{equation}
    where $k_i = n_i -n_{i+1}$ for $1\le i\le m$, with the convention that $n_{m+1}=0$, and $\hat V^{(i)} = (\hat v_1^{(i)},\ldots,\hat v_{k_i}^{(i)})$ for $1\le i\le m$, and the functions $f_i$ are defined in \eqref{eq:def_f}. Moreover, for all other $\bsn$ we have $\hat{\mathcal{D}}_{\bsn}=0$.
    \end{enumerate}
\end{lm}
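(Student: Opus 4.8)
The plan is to prove the two parts separately, in each case exploiting that the Cauchy determinants and the functions $F_i$ in \eqref{eq:D_individual} are independent of $\bsz$: only the scalar prefactor $\prod_i(1-z_i)^{n_i}(1-z_i^{-1})^{n_{i+1}}$ and the linear combinations selecting the inner/outer contours carry $z$-dependence. For part (i) I would perform the $m-1$ integrals $\oint_0\rd z_i$ one at a time. Fixing $i$ and collecting every factor containing $z_i$, namely the prefactor piece $(1-z_i)^{n_i}(1-z_i^{-1})^{n_{i+1}}$ together with the $2n_{i+1}$ selection operators $\frac{1}{1-z_i}(\int_{\Gamma_{i+1,\rL}^{\inn}}-z_i\int_{\Gamma_{i+1,\rL}^{\out}})$ and $\frac{1}{1-z_i}(\int_{\Gamma_{i+1,\rR}^{\inn}}-z_i\int_{\Gamma_{i+1,\rR}^{\out}})$ attached to the variables $u^{(i+1)}_{\ell}$ and $v^{(i+1)}_{\ell}$, I would use $(1-z_i^{-1})^{n_{i+1}}=(-1)^{n_{i+1}}z_i^{-n_{i+1}}(1-z_i)^{n_{i+1}}$ and combine all powers of $(1-z_i)$ with the measure $\frac{\rd z_i}{2\pi\rmi z_i(1-z_i)}$. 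The $z_i$-integrand then becomes $(-1)^{n_{i+1}}z_i^{-n_{i+1}-1}(1-z_i)^{n_i-n_{i+1}-1}$ times the product of the $(\int^{\inn}-z_i\int^{\out})$ factors, so that $\oint_0$ reduces to the coefficient extraction $[z_i^{n_{i+1}}]$.

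The heart of part (i) is to show that this coefficient extraction collapses the inner/outer sum to the single \emph{reference} configuration, in which every $u^{(i+1)}$ sits on $\Gamma_{i+1,\rL}^{\inn}$ and every $v^{(i+1)}$ sits on $\Gamma_{i+1,\rR}^{\out}$. One cannot simply identify the inner and outer integrals: deforming $u^{(i+1)}_{\ell}$ from $\Gamma_{i+1,\rL}^{\inn}$ to $\Gamma_{i+1,\rL}^{\out}$ crosses the contour carrying $u^{(i)}$, hence a pole of $\rC(U^{(i)}\sqcup V^{(i+1)};V^{(i)}\sqcup U^{(i+1)})$ at $u^{(i+1)}_{\ell}=u^{(i)}_{a}$, and symmetrically for the $v$-variables; a quick check with the hypothetical ``no pole crossed'' shows one would be left with a spurious binomial coefficient instead of the desired coefficient $1$, so the crossings are essential. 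My approach would be to expand the $(\int^{\inn}-z_i\int^{\out})$ products, group terms by the number of ``outer'' choices using the symmetry of the integrand in the $u^{(i+1)}$ (resp.\ $v^{(i+1)}$) variables, deform each non-reference contour back to the reference while recording the residues crossed, and then verify via an elementary binomial identity in the variable counts that these residue contributions, weighted by the coefficients produced by $[z_i^{n_{i+1}}]$, cancel for every configuration except the reference one, which survives with coefficient exactly $1$. Iterating over $z_{m-1},\ldots,z_1$ produces \eqref{eq:hat_D_expression}.

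For part (ii) I would start from \eqref{eq:hat_D_expression} and deform every $v^{(i)}$-contour inward from $\Gamma_{i,\rR}^{\out}$ to the central contour $\Gamma_{1,\rR}$. Each such deformation crosses poles of the two Cauchy determinants involving $v^{(i)}$, and the residues contract $v$-variables of adjacent levels. Tracking the counts, exactly $n_{i+1}$ of the $n_i$ variables $v^{(i)}$ are eliminated at level $i$ (consistent with the telescoping $\sum_i(n_i-n_{i+1})=n_1$), leaving $k_i=n_i-n_{i+1}$ free variables $\hat V^{(i)}$ on $\Gamma_{1,\rR}$; the number of injective ways to choose which variables to contract, after using the symmetry of the integrand, supplies the factor $\prod_i n_i!/k_i!$. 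Because $F_i=f_i/f_{i-1}$, the product $\prod_{i,\ell_i}1/F_i(v^{(i)}_{\ell_i})$ telescopes under the contractions into $\prod_{i,\hat\ell_i}1/f_i(\hat v^{(i)}_{\hat\ell_i})$, and the two chains of Cauchy determinants reorganize into $\rC(\hat V^{(m)}\sqcup\cdots\sqcup\hat V^{(1)};U^{(1)})$ and $\prod_i\rC(U^{(i)};U^{(i+1)}\sqcup\hat V^{(i)})$, whose dimensions match precisely because $n_{i+1}+k_i=n_i$ and $\sum_i k_i=n_1$. Finally, if $n_i<n_{i+1}$ for some $i$, the contraction at level $i$ would demand eliminating more variables than the $n_i$ available, forcing two coincident nodes in a Cauchy determinant and hence $\hat{\mathcal{D}}_{\bsn}=0$.

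The step I expect to be the main obstacle is the cancellation in part (i): bookkeeping exactly which poles are crossed when a variable is moved between its inner and outer contours, tracking the interplay of these residues with the binomial weights coming from $[z_i^{n_{i+1}}]$, and proving rigorously that every non-reference configuration cancels while the reference one retains coefficient $1$, with all signs accounted for. The analogous residue/contraction accounting in part (ii)---in particular producing the combinatorial factor $\prod_i n_i!/k_i!$, the telescoping of the $F_i$ into $f_i$, and the vanishing for non-monotone $\bsn$---is of the same flavor but should become transparent once the mechanism of (i) is in hand.
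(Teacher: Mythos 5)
There is a genuine gap, and it is exactly the step you flag as ``the main obstacle'': you never supply the cancellation argument for part (i), and the paper's proof shows that no such binomial/residue bookkeeping is needed because of an analytic input your plan is missing. The decisive observation in the paper is a \emph{vanishing lemma} for the right contours: in any term of the expansion of \eqref{eq:D_individual} in which some $v^{(i)}_{\ell_i}$ sits on $\Gamma_{i,\rR}^\inn$, take the \emph{largest} such index $i\ge 2$; then all potential poles of that variable (the $u$'s on the left half plane, $v^{(i-1)}$ on $\Gamma_{i-1,\rR}^{\inn}\cup\Gamma_{i-1,\rR}^{\out}$, and $v^{(i+1)}$ on $\Gamma_{i+1,\rR}^{\out}$ by maximality) lie strictly to the left of $\Gamma_{i,\rR}^\inn$, so the contour may be translated to $R+\Gamma_{i,\rR}^\inn$ with $R\to+\infty$ without crossing anything, and the integral dies because $1/F_i(v)=e^{-(\alpha_i-\alpha_{i-1})v^2-\cdots}$ decays super-exponentially along the $\pm\pi/5$ rays (here $\alpha_i>\alpha_{i-1}$ is used). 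Hence only the all-outer choice for the $v$'s survives, and for that single term the factor $\bigl(-z_{i-1}/(1-z_{i-1})\bigr)^{n_i}$ produced by the $n_i$ outer $v$-selections cancels the prefactor $(1-z_{i-1}^{-1})^{n_i}$ identically, leaving an integrand analytic in each $z_i$ at the origin. The integral $\oint_0 \frac{\rd z_i}{2\pi\rmi\, z_i(1-z_i)}$ then simply evaluates the integrand at $z_i=0$, which kills the outer $u$-selections and yields \eqref{eq:hat_D_expression} directly --- no coefficient extraction $[z_i^{n_{i+1}}]$, no residue-versus-binomial cancellation among $2^{2n_{i+1}}$ configurations. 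Your setup of the $z_i$-integrand as $(-1)^{n_{i+1}}z_i^{-n_{i+1}-1}(1-z_i)^{n_i-n_{i+1}-1}\prod(\int^{\inn}-z_i\int^{\out})$ is algebraically correct, but the program you build on it is both unexecuted and, once the vanishing lemma is known, unnecessary; indeed the ``spurious binomial coefficient'' you observe is a symptom that the missing ingredient is analytic (vanishing at infinity), not combinatorial.

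Your sketch of part (ii) is essentially the paper's argument --- successive residue contraction between adjacent $v$-levels, the multiplicity count $n_i!/k_i!=n_i(n_i-1)\cdots(n_{i+1}+1)$, the telescoping $F_1\cdots F_i=f_i$, and vanishing for non-monotone $\bsn$ via degeneracy of the Cauchy determinant --- but note that your claim ``exactly $n_{i+1}$ of the $n_i$ variables $v^{(i)}$ are eliminated'' again presupposes the same missing vanishing statement: deforming $v^{(i)}$ inward from $\Gamma_{i,\rR}^\out$ to $\Gamma_{1,\rR}$ leaves, besides the residues at $v^{(i-1)}$, a surviving integral of an uncontracted level-$i$ variable along $\Gamma_{1,\rR}$ carrying weight $1/F_i$ rather than $1/f_i$, and such terms have no counterpart in \eqref{eq:hat_D_simplification}. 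They are killed only by pushing that contour rightward to infinity (legitimate since all poles lie to its left and $1/F_i$ decays), which is the same mechanism the paper uses when it deforms each $\Gamma_{i,\rR}^\out$ to the \emph{vanishing} contour $\Gamma_{i,\rR}^\inn$ so that \emph{only} residue terms survive. So both halves of your proposal hinge on one concrete missing idea; once you add the rightward-deformation vanishing argument, part (i) collapses to an evaluation at $z_i=0$ and part (ii) goes through as you describe.
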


\begin{proof}[Proof of Lemma \ref{lm:z_integral}]

The proof of part (i) is basically the same as that of Lemma 6.1 in \cite{Liu-Zhang25}. We look at the expansion of the $v$-integrals in \eqref{eq:D_individual} by only taking one contour for each variable in one term. We give an example to illustrate such an expansion: $(c_1\int_{I_1}\rd x +c_2\int_{I_2} \rd x)(c'_1\int_{I'_1}\rd x' +c'_2\int_{I'_2} \rd x')$ can be expanded to the sum of the following $4$ terms $c_ic'_{i'}\int_{I_i}\rd x\int_{I'_{i'}}\rd x'$, $1\le i,i'\le 2$. After the expansion, we have a sum of terms each of which is a multiple contour integral with each $v$-variable running over one single contour. Consider a term where each $v$-contour is a single contour. If any of the $v_{\ell_i}^{(i)}$ contour is $\Gamma_{i,\rR}^\inn$, assume $i\ge 2$ is the largest index with this property. With this assumption, we can deform the $v_{\ell_i}^{(i)}$ contour from  $\Gamma_{i,\rR}^\inn$ to $R+\Gamma_{i,\rR}^\inn$ for any $R>0$ without encountering any pole since the only possible poles are $u_{\ell_{j}}^{(j)}$'s on the left half plane, $v_{\ell_{i-1}}^{(i-1)} \in \Gamma_{i-1,\rR}^{\inn}\cup\Gamma_{i-1,\rR}^{\out}$, and $v_{\ell_{i+1}}^{(i+1)}\in\Gamma_{i+1,\rR}^\out$ which are all on the left side of $\Gamma_{i,\rR}^\inn$, see Figure \ref{fig:Gamma_contours}. Now we let $R\to +\infty$ and the $v_{\ell_i}^{(i)}$ integral decays to zero since $1/F_i(v_{\ell_i}^{(i)})$ decays super-exponentially fast as $R$ grows for any $i\ge 2$, see the definition of $F_i$ functions in \eqref{eq:def_F}. Thus, the integral vanishes if any $\Gamma_{i,\rR}^\inn$ is included in the term from the expansion of \ \eqref{eq:D_individual}. Now we drop all these vanishing terms and have only one surviving term 
\begin{equation}
\mathcal{D}_\bsn = \prod_{i=1}^{m-1} (1-z_i)^{n_i}\prod_{i=2}^m \prod_{\ell_i=1}^{n_i} 
     \left(\frac{1}{1-z_{i-1}} \int_{\Gamma_{i,\rL}^\inn} \frac{\rd u_{\ell_i}^{(i)}}{2\pi\rmi} 
     -\frac{z_{i-1}}{1-z_{i-1}} \int_{\Gamma_{i,\rL}^\out} \frac{\rd u_{\ell_i}^{(i)}}{2\pi\rmi} \right)\int_{\Gamma_{i,\rR}^\out} \frac{\rd v_{i_\ell}^{(i)}}{2\pi\rmi}\cdots,
\end{equation}
where we suppressed the integrand, which is irrelevant to the $z$-variables. The function above is analytic in $z_i$ at the origin for each $i$. Hence, evaluating the $z$-integrals $\oint_0\cdots \oint_0 \mathcal{D}_{\bsn} \prod_{i=1}^{m-1}\frac{\rd z_i}{2\pi\rmi z_i(1-z_i)}$ is the same as inserting $z_i=0$ in the expression above. This proves \eqref{eq:hat_D_expression}.

Now we prove the second part of the lemma. It requires evaluations of some $v$-integrals recurrently, which we explain below.

We start with the formula \eqref{eq:hat_D_expression} and evaluate the integrals of $v_{\ell_m}^{(m)}$ along the contour $\Gamma_{m,\rR}^\out$. If we deform one $v_{\ell_m}^{(m)}$ contour to $\Gamma_{m,\rR}^\inn$, then the $v_{\ell_m}^{(m)}$-integral along the new contour $\Gamma_{m,\rR}^\inn$ vanishes, as we discussed in the proof of part (i). This implies that each integral of $v_{\ell_m}^{(m)}$ along the contour $\Gamma_{m,\rR}^\out$ only leaves the residue of the integrand when $v_{\ell_m}^{(m)}$ encounters one of the poles $v_{\ell_{m-1}}^{(m-1)}$, $1\le \ell_{m-1}\le n_{m-1}$. Moreover, due to the Cauchy determinant structure, the factor $\rC(U^{(m)};V^{(m)})$ vanished when we evaluate two $v_{\ell_m}^{(m-1)}$ variables at the same pole (e.g., when both $v_{1}^{(m)}$ and $v_2^{(m)}$ equals to $v_1^{(m-1)}$). This implies that the integrals of $v_1^{(m)},\ldots,v_{n_m}^{(m)}$ give zero (hence $\hat{\mathcal{D}}_\bsn=0$) unless $n_m\le n_{m-1}$ and $v_{1}^{(m)},\dots,v_{n_m}^{(m)}$ are evaluated at different poles among $v_1^{(m-1)},\ldots,v_{n_{m-1}}^{(m-1)}$. Also note that the integrand is symmetric on the poles $v_1^{(m-1)},\ldots,v_{n_{m-1}}^{(m-1)}$. Therefore, there are $n_{m-1}\cdot (n_{m-1}-1) \cdots (n_{m-1}-n_m+1) = {n_{m-1}!}/{k_{m-1}!}$ ways to evaluate the residues and each way gives the same result. Without loss of generality, we evaluate the pole of $v_{\ell_m}^{(m)}$ at $v_{\ell_m}^{(m-1)}$ for each $\ell_m=1,\ldots,n_{m}$. This evaluation gives
\begin{equation}
\label{eq:aux_01_lemma}
\begin{split}
    \hat{\mathcal{D}}_{\bsn} &= \frac{n_{m-1}!}{k_{m-1}!} \prod_{i=1}^{m-1} \prod_{\ell_i=1}^{n_i}  \left(\int_{\Gamma_{i,\rL}^\inn} \frac{\rd u_{\ell_i}^{(i)}}{2\pi\rmi}  \int_{\Gamma_{i,\rR}^\out} \frac{\rd v_{\ell_i}^{(i)}}{2\pi\rmi}  \right) \prod_{\ell_m=1}^{n_m}\int_{\Gamma_{m,\rL}^\inn} \frac{\rd u_{\ell_m}^{(m)}}{2\pi\rmi} \\
        &\quad
        \rC(V^{(1)};U^{(1)}) \cdot \prod_{i=1}^{m-2} \rC(U^{(i)}\sqcup V^{(i+1)}; V^{(i)}\sqcup U^{(i+1)}) \cdot \rC\left(U^{(m-1)}; U^{(m)}\sqcup V^{(m-1)}_{[n_m+1,n_{m-1}]}\right)\\
        &\quad\cdot   \rC\left(U^{(m)};V^{(m-1)}_{[1,n_{m}]}\right) \cdot  \frac{\prod_{i=1}^{m} \prod_{\ell_i=1}^{n_i} F_i (u_{\ell_i}^{(i)})}{ \prod_{i=1}^{m-1} \prod_{\ell_i=1}^{n_i} F_i (v_{\ell_i}^{(i)}) \cdot  \prod_{\ell_{m}=1}^{n_m} F_m(v_{\ell_{m}}^{(m-1)})},
\end{split}
\end{equation}
where we used the following notation
\begin{equation}
\label{eq:notation_partV}
V_{[a,b]}^{(i)}:= (v_a^{(i)},v_{a+1}^{(i)},\ldots,v_b^{(i)}), \quad 1\le a\le b\le n_i,\quad 1\le i\le m.
\end{equation}
We continue the same argument for the $v_{\ell_{m-1}}^{(m-1)}$ integrals. Similarly, we get $\hat{\mathcal{D}}_{\bsn}=0$ if $n_{m-1}>n_{m-2}$. Moreover, when $n_{m-1}\le n_{m-2}$, the residue evaluation gives
\begin{equation}
\begin{split}
    \hat{\mathcal{D}}_{\bsn} &= \frac{n_{m-1}!n_{m-2}!}{k_{m-1}!k_{m-2}!} \prod_{i=1}^{m-2} \prod_{\ell_i=1}^{n_i}  \left(\int_{\Gamma_{i,\rL}^\inn} \frac{\rd u_{\ell_i}^{(i)}}{2\pi\rmi}  \int_{\Gamma_{i,\rR}^\out} \frac{\rd v_{\ell_i}^{(i)}}{2\pi\rmi}  \right) \prod_{j=m-1}^m\prod_{\ell_j=1}^{n_j}\int_{\Gamma_{j,\rL}^\inn} \frac{\rd u_{\ell_j}^{(j)}}{2\pi\rmi} \\
        &\quad
        \rC(V^{(1)};U^{(1)}) \cdot \prod_{i=1}^{m-3} \rC(U^{(i)}\sqcup V^{(i+1)}; V^{(i)}\sqcup U^{(i+1)}) \cdot \prod_{j=m-2}^{m-1}\rC\left(U^{(j)}; U^{(j+1)}\sqcup V^{(m-2)}_{[n_{j+1}+1,n_{j}]}\right)\\
        &\quad   \cdot   \rC\left(U^{(m)};V^{(m-2)}_{[1,n_{m}]}\right) \cdot  \frac{\prod_{i=1}^{m} \prod_{\ell_i=1}^{n_i} F_i (u_{\ell_i}^{(i)})}{ \prod_{i=1}^{m-2} \prod_{\ell_i=1}^{n_i} F_i (v_{\ell_i}^{(i)}) \cdot  \prod_{j=m-1}^m\prod_{\ell_{j}=1}^{n_j} F_j(v_{\ell_j}^{(m-2)})}.
\end{split}
\end{equation}
Repeating this procedure until only $v_{\ell_1}^{(1)}$ integrals are left, we obtain $\hat{\mathcal{D}}_{\bsn}=0$ if any $n_i>n_{i-1}$ holds, $2\le i\le m$. Moreover, if $n_1\ge \cdots\ge n_m$, we end with
\begin{equation}
\label{eq:aux_02_lemma}
\begin{split}
    \hat{\mathcal{D}}_{\bsn} &= \prod_{i=1}^{m-1}\frac{n_{i}!}{k_{i}!} \prod_{j=2}^m\prod_{\ell_j=1}^{n_j}\int_{\Gamma_{j,\rL}^\inn} \frac{\rd u_{\ell_j}^{(j)}}{2\pi\rmi} 
        \prod_{\ell_1=1}^{n_1} \left(\int_{\Gamma_{1,\rL}} \frac{\rd u_{\ell_1}^{(1)}}{2\pi\rmi} \int_{\Gamma_{1,\rR}} \frac{\rd v_{\ell_1}^{(1)}}{2\pi\rmi} \right)\\
        &\quad
        \rC(V^{(1)};U^{(1)}) \cdot \prod_{j=1}^{m-1}\rC\left(U^{(j)}; U^{(j+1)}\sqcup V^{(1)}_{[n_{j+1}+1,n_{j}]}\right)    \cdot   \rC\left(U^{(m)};V^{(1)}_{[1,n_{m}]}\right) \cdot  \frac{\prod_{i=1}^{m} \prod_{\ell_i=1}^{n_i} F_i (u_{\ell_i}^{(i)})}{   \prod_{j=1}^m\prod_{\ell_{j}=1}^{n_j} F_j(v_{\ell_j}^{(1)})}.
\end{split}
\end{equation}
Note that, by recalling \eqref{eq:def_f} and \eqref{eq:def_F},
\begin{equation}
\prod_{j=1}^m\prod_{\ell_{j}=1}^{n_j} F_j(v_{\ell_j}^{(1)}) =\prod_{j=1}^{m} \prod_{\ell_j=n_{j+1}+1}^{n_j} f_j(v_{\ell_j}^{(1)}).
\end{equation}
By relabeling the $v^{(1)}$-variables in \eqref{eq:aux_02_lemma}, we obtain \eqref{eq:hat_D_simplification}.
\end{proof}

\bigskip

The next step is to further simplify \eqref{eq:hat_D_simplification} by re-organizing the $u$-integrals. We will assume that $n_1\ge \cdots\ge n_m$, since otherwise $\hat{\mathcal{D}}_\bsn=0$ as shown in Lemma \ref{lm:z_integral}.

\begin{lm}
\label{lm:hatD_version2}
Assume $k_i=n_i-n_{i+1}\ge 0$ for each $i=1,\ldots,m$, where we set $n_{m+1}=0$ for notation convenience. Then we have
\begin{equation}
\label{eq:hatD_expression2}
\begin{split}
    \hat{\mathcal{D}}_\bsn & = \left(\prod_{i=1}^{m-1}\frac{n_i!}{k_i!}\right)^2 \prod_{i=1}^m \prod_{\hat \ell_i=1}^{k_i} \int_{\Gamma_{1,\rL}} \frac{\rd \hat u_{\hat \ell_i}^{(i)}}{2\pi\rmi}
    \int_{\Gamma_{1,\rR}} \frac{\rd \hat v_{\hat \ell_i}^{(i)}}{2\pi\rmi}\rC(\hat V; \hat U) \cdot \prod_{i=1}^m \det\left[h_i(\hat u^{(i)}_{a},\hat v^{(i)}_b)\right]_{a,b=1}^{k_i} \cdot \prod_{i=1}^m \prod_{\hat \ell_i=1}^{k_i} \frac{1}{f_i(\hat v_{\hat \ell_i}^{(i)})}
\end{split}
\end{equation}
where 
\begin{equation}
\hat V= \hat V^{(m)}\sqcup \cdots \sqcup \hat V^{(1)}, \hat U = \hat U^{(m)}\sqcup \cdots \sqcup \hat U^{(1)}
\end{equation}
with 
\begin{equation}
\hat U^{(i)} = (\hat u_1^{(i)},\ldots, \hat u_{k_i}^{(i)}),\quad \hat V^{(i)} = (\hat v_1^{(i)},\ldots,\hat v_{k_i}^{(i)}),\quad 1\le i\le m,
\end{equation}
and the functions $h_i(u,v)$ are defined for all $(u,v)\in\Gamma_{1,\rL}\times\Gamma_{1,\rR}$ as follows
\begin{equation}
\label{eq:def_hi}
h_i(u,v)= 
\begin{dcases}
\frac{F_1(u)}{u-v},& i=1,\\
\prod_{j=2}^{i}\int_{\Gamma_{j,\rL}^\inn}  \frac{\rd u_j}{2\pi\rmi}  \frac{\prod_{j=2}^{i}F_{j}(u_j) \cdot F_1(u)}{(u-u_2)\cdot \prod_{j=2}^{i-1}(u_j-u_{j+1})\cdot  (u_i-v)},& 2\le i\le m.
\end{dcases}
\end{equation}
\end{lm}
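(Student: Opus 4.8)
The plan is to pass from the source expression \eqref{eq:hat_D_simplification} to the target \eqref{eq:hatD_expression2} in two stages. In the first stage I collapse the telescoping chain of Cauchy determinants $\rC(\hat V;U^{(1)})\prod_{i=1}^m\rC(U^{(i)};U^{(i+1)}\sqcup\hat V^{(i)})$ by integrating out the inner variables $U^{(2)},\ldots,U^{(m)}$, which live on the inner contours $\Gamma_{2,\rL}^{\inn},\ldots,\Gamma_{m,\rL}^{\inn}$; this turns the chain into a single $n_1\times n_1$ determinant whose entries are exactly the kernels $h_i$ of \eqref{eq:def_hi}. In the second stage I reconcile this collapsed form with the target by applying the classical and the generalized Andreief identities (Lemma \ref{lm:Andreief_ext}) to the remaining left-contour variables, and then check that the two factorial prefactors agree.

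For the first stage I would remove the levels from the innermost $i=m$ down to $i=2$, one at a time. At the step removing $U^{(i)}$, these variables sit in the rows of the determinant produced by the previous steps and in a block of columns of $\rC(U^{(i-1)};U^{(i)}\sqcup\hat V^{(i-1)})$. I would expand the latter by a generalized Laplace expansion along its $U^{(i)}$-columns, apply the classical Andreief identity (the $m=1$ case of Lemma \ref{lm:Andreief_ext}) to the $U^{(i)}$-integrals, and then reassemble the Laplace sum into a single determinant. Each such step replaces the $n_i$ variables $U^{(i)}$ by a factor $n_i!$ and composes one more internal integration (over $\Gamma_{i,\rL}^{\inn}$, with weight $F_i$) into the kernels joining the surviving rows to the columns $\hat V^{(i)},\ldots,\hat V^{(m)}$, while the freshly appearing column $\hat V^{(i-1)}$ is attached by the plain factor $1/(u-\hat v)$. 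Iterating, the plain Cauchy factor attached to $\hat V^{(i)}$ grows, one internal variable at a time, into the nested integral defining $h_i$. After all inner levels are gone and the weights $\prod F_1(u^{(1)})$ are absorbed into the rows, I obtain
\[
\hat{\mathcal{D}}_{\bsn}=\prod_{i=1}^{m-1}\frac{n_i!}{k_i!}\cdot\prod_{i=2}^{m}n_i!\cdot\prod_{\ell_1=1}^{n_1}\int_{\Gamma_{1,\rL}}\frac{\rd u^{(1)}_{\ell_1}}{2\pi\rmi}\prod_{\hat\ell=1}^{n_1}\int_{\Gamma_{1,\rR}}\frac{\rd\hat v_{\hat\ell}}{2\pi\rmi}\;\rC(\hat V;U^{(1)})\det\!\big[h_{i(b)}(u^{(1)}_a,\hat v_b)\big]_{a,b=1}^{n_1}\prod_b\frac{1}{f_{i(b)}(\hat v_b)},
\]
where $\hat V=\hat V^{(m)}\sqcup\cdots\sqcup\hat V^{(1)}$ and $i(b)$ denotes the level of the column variable $\hat v_b$.

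For the second stage I observe that both the collapsed source above and the target \eqref{eq:hatD_expression2} reduce, after integrating out their remaining left-contour variables, to the same $\hat v$-integral of the determinant $\det\big[\int_{\Gamma_{1,\rL}}\tfrac{h_{j}(u,\hat v^{(j)}_b)}{\hat v^{(i)}_a-u}\tfrac{\rd u}{2\pi\rmi}\big]$, whose rows and columns are indexed by the components of $\hat V$. On the source side this is the classical Andreief identity applied to $\rC(\hat V;U^{(1)})=\det[1/(\hat v_a-u^{(1)}_b)]$ and $\det[h_{i(b)}(u^{(1)}_a,\hat v_b)]$, and produces the factor $n_1!$. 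On the target side I apply the generalized identity \eqref{eq:Andeief_ext2} with the partition into blocks $I_i$ of size $k_i$: here $\rC(\hat V;\hat U)$ plays the role of the full determinant $\det[A_i(x_j)]$ and the product $\prod_i\det[h_i(\hat u^{(i)}_a,\hat v^{(i)}_b)]$ plays the role of $\prod_k\det[B_i(x_j)]_{I_k}$, producing the factor $\prod_i k_i!$ and the very same determinant. It then suffices to match the prefactors, namely $\prod_{i=1}^{m-1}\frac{n_i!}{k_i!}\cdot\prod_{i=2}^{m}n_i!\cdot n_1!$ on the source side against $\big(\prod_{i=1}^{m-1}\frac{n_i!}{k_i!}\big)^2\cdot\prod_{i=1}^m k_i!$ on the target side; a short computation using $k_m=n_m$ shows they coincide.

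I expect the first stage to be the main obstacle. All the integrals there are evaluated by the algebraic Andreief identity rather than by contour deformation, so no residues are crossed; the difficulty is instead the bookkeeping of the iterated Laplace expansions — verifying that after each level is removed the surviving object is again a single determinant, with the processed columns carrying the partially built nested integrals and the unprocessed columns still carrying plain Cauchy factors, and that the fully built kernel is exactly $h_i$ with the correct internal contours $\Gamma_{j,\rL}^{\inn}$ and weights $F_j$. The absolute convergence of every multiple integral, guaranteed by the super-exponential decay of the $F_i$ and $1/f_i$ along their contours, is what justifies the repeated use of Fubini's theorem and the interchange of the finite sums with the integrals throughout.
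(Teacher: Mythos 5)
Your proposal is correct, but it takes a genuinely different route from the paper. The paper never integrates out the inner variables $U^{(2)},\ldots,U^{(m)}$ into one big $n_1\times n_1$ determinant; instead it uses the symmetrization lemma (Lemma \ref{lm:symmetry}) to de-symmetrize each factor $\rC(U^{(i)};U^{(i+1)}\sqcup\hat V^{(i)})$ in \eqref{eq:hat_D_simplification} into a diagonal product (factor $n_i!$) and then re-symmetrize only the $\hat V^{(i)}$-part into the small Cauchy determinant $\rC(\hat U^{(i)};\hat V^{(i)})$ (factor $1/k_i!$), which produces the squared prefactor and the intermediate identity \eqref{eq:aux_05_proof_Prop}; the chains $\prod_{\ell}1/(u^{(i-1)}_\ell-u^{(i)}_\ell)$ are then absorbed row by row into these $k_i\times k_i$ determinants and the internal $u$-integrals are evaluated inside the entries, giving \eqref{eq:def_hi} and hence \eqref{eq:hatD_expression2} directly — no Andreief-type identity is invoked at this stage (Lemma \ref{lm:Andreief_ext} is reserved for the subsequent proof of Proposition \ref{prop:alt_formula}). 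Your Stage 1 instead performs an iterated Cauchy--Binet/Andreief column-integration from level $m$ downward; the mechanism is sound (expand $\rC(U^{(i)};\hat V^{(i)}\sqcup\cdots)$ over permutations, use multilinearity in the $U^{(i)}$-columns, reassemble, collect $n_i!$), the square bookkeeping works ($n_i$ rows against $k_i+\cdots+k_m=n_i$ columns at each step), and the nested kernels you build are exactly the $h_i$ of \eqref{eq:def_hi} with the correct contours $\Gamma_{j,\rL}^{\inn}$. Your Stage 2 then proves the lemma indirectly, by showing that the collapsed form and the target \eqref{eq:hatD_expression2} both reduce to the same fully integrated determinant $\det\bigl[\int_{\Gamma_{1,\rL}}\frac{h_{i(b)}(u,\hat v_b)}{\hat v_a-u}\frac{\rd u}{2\pi\rmi}\bigr]$ — classical Andreief (factor $n_1!$) on one side, the generalized identity \eqref{eq:Andeief_ext2} (factor $\prod_i k_i!$) on the other — and the prefactors indeed agree since $k_m=n_m$ gives $\prod_{i=1}^{m-1}\frac{n_i!}{k_i!}\cdot\prod_{i=1}^{m}n_i! = \bigl(\prod_{i=1}^{m-1}\frac{n_i!}{k_i!}\bigr)^2\prod_{i=1}^{m}k_i!$. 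In effect you pull the Andreief step of the paper's Proposition \ref{prop:alt_formula} proof (in a dual form, integrating the left-contour rather than the right-contour variables) into the proof of this lemma. What the paper's route buys is a direct derivation that preserves the exact form of \eqref{eq:hatD_expression2} with the factor $\rC(\hat V;\hat U)$ intact and minimal combinatorial machinery; what yours buys is self-containedness of the determinant manipulations at the cost of heavier Laplace-expansion bookkeeping and a final factorial reconciliation, and it anticipates the kernel $\bfK$ of \eqref{eq:def_bfK} one step early. The only points demanding care in your write-up are the column-ordering conventions (so that no stray sign enters when the Stage-1 columns are matched against the row order of $\hat V$ in $\rC(\hat V;U^{(1)})$ and against the block structure in Stage 2) and the Fubini justifications, which you correctly attribute to the super-exponential decay of the $F_i$ and $1/f_i$ along their contours.
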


We will use the following simple lemma repeatedly in the proof of Lemma \ref{lm:hatD_version2}. 
\begin{lm}
\label{lm:symmetry}
Let $\Sigma$ be a contour. $F$ is a function on $\Sigma^n$ which is antisymmetric, i.e., $F(w_1,\ldots,w_n) =\sgn(\sigma) F(w_{\sigma_1},\ldots,w_{\sigma_n})$ for any permutation $\sigma$ of $\{1,\ldots,n\}$. Assume that the integrals in this lemma are all well-defined.
\begin{enumerate}[(i)]
\item Let $p_i$, $1\le i\le n$, be functions. Then
\begin{equation}
\label{eq:identity_expansion_u}
\int_{\Sigma^n} F(w_1,\ldots,w_n) \det\left[ p_i(w_j)\right]_{i,j=1}^n \prod_{i=1}^n \rd w_i = n! \int_{\Sigma^n} F(w_1,\ldots,w_n) \prod_{i=1}^n p_i(w_i) \rd w_i.
\end{equation}
\item Assume $k\le n$. Let $w'_j$, $1\le j\le k$, be complex numbers on a contour $\Sigma'$, and $q$ be a function on $\Sigma\times\Sigma'$. Then 
\begin{equation}
\int_{\Sigma^n} F(w_1,\ldots,w_n) \prod_{j=1}^{k} q(w_j, w'_j)  \prod_{i=1}^n\rd w_i
\end{equation}
is antisymmetric in $w'_1,\ldots,w'_k$.
\end{enumerate}
\end{lm}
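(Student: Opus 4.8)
The plan is to derive both parts from a single elementary observation: since $F$ is antisymmetric and all $n$ integrations run over the same contour $\Sigma$ against the same product measure $\prod_{i=1}^n \rd w_i$, any permutation of the roles played by the integration variables can be absorbed into a sign via a relabelling of dummy variables. The only analytic input is the standing hypothesis that every integral in sight is well-defined (absolutely convergent), which I would invoke once to justify interchanging a finite sum with an integral and to justify the coordinate relabellings, both being instances of Fubini's theorem together with the permutation-invariance of the product measure on $\Sigma^n$.

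For part (i), I would first expand the determinant by the Leibniz formula, $\det[p_i(w_j)]_{i,j=1}^n = \sum_{\sigma}\sgn(\sigma)\prod_{i=1}^n p_i(w_{\sigma_i})$, and exchange this finite sum with the integral. In the term indexed by $\sigma$, substitute $v_i := w_{\sigma_i}$, equivalently $w_j = v_{\sigma^{-1}(j)}$. This turns $\prod_i p_i(w_{\sigma_i})$ into $\prod_i p_i(v_i)$, turns $F(w_1,\dots,w_n)$ into $F(v_{\sigma^{-1}(1)},\dots,v_{\sigma^{-1}(n)}) = \sgn(\sigma)\,F(v_1,\dots,v_n)$ by antisymmetry, and leaves $\prod_i \rd w_i = \prod_i \rd v_i$ unchanged. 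The two factors of $\sgn(\sigma)$ cancel, so each of the $n!$ terms contributes $\int_{\Sigma^n} F(v_1,\dots,v_n)\prod_i p_i(v_i)\,\prod_i \rd v_i$, which gives the factor $n!$.

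For part (ii), since transpositions generate the symmetric group on $\{1,\dots,k\}$, it suffices to show that interchanging $w'_a$ and $w'_b$ with $a,b\le k$ multiplies the integral by $-1$. After the interchange, $\prod_{j=1}^k q(w_j,w'_j)$ becomes $q(w_a,w'_b)\,q(w_b,w'_a)\prod_{j\neq a,b} q(w_j,w'_j)$; relabelling the integration variables by the transposition $w_a\leftrightarrow w_b$ then restores $\prod_{j=1}^k q(w_j,w'_j)$ and, by the antisymmetry of $F$, introduces exactly the sign $-1$, which is the claim.

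I do not anticipate a genuine obstacle in this lemma; the one point that deserves a sentence of care is the legitimacy of pushing the Leibniz sum past the integral sign and of relabelling the iterated contour integrals, and that is precisely where the hypothesis that all integrals are well-defined enters the argument.
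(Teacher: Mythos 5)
Your proposal is correct and follows essentially the same route as the paper: part (i) by expanding the determinant via the Leibniz formula, relabelling the integration variables by $\sigma$, and cancelling the two signs using the antisymmetry of $F$; part (ii) by relabelling the integration variables according to the permutation of the $w'_j$ (the paper does this for a general permutation extended by the identity on $\{k+1,\dots,n\}$, while you reduce to transpositions, an immaterial difference). No gaps.
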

\begin{proof}[Proof of Lemma \ref{lm:symmetry}]
For (i), one just needs to expand the determinant and use the antisymmetry of $F$. Now consider the second part. For any permutation of $\sigma:\{1,\ldots,k\}\to \{1,\ldots,k\}$, we extend it to a permutation on $\{1,\ldots,n\}$ by defining $\sigma(i)=i$ if $i>k$. Then
\begin{equation}
\begin{split}
    \int_{\Sigma^n} F(w_1,\ldots,w_n) \prod_{j=1}^{k} q(w_j, w'_{\sigma_j})  \prod_{i=1}^n\rd w_i
    &=\int_{\Sigma^n} F(w_{\sigma_1},\ldots,w_{\sigma_n}) \prod_{j=1}^{k} q(w_{\sigma_j}, w'_{\sigma_j})  \prod_{i=1}^n\rd w_i\\
    &=\int_{\Sigma^n} F(w_{\sigma_1},\ldots,w_{\sigma_n}) \prod_{j=1}^k q(w_j,w'_j) \prod_{i=1}^n\rd w_i\\
    &=\sgn(\sigma) \int_{\Sigma^n} F(w_1,\ldots,w_n) \prod_{j=1}^{k} q(w_j, w'_{j})  \prod_{i=1}^n\rd w_i,
\end{split}
\end{equation}
where we relabeled the $w$-variables in the first step, and applied the antisymmetry of $F$ in the last step. This proves the second part of the lemma.
\end{proof}

Now we are ready to prove Lemma \ref{lm:hatD_version2}.

\begin{proof}[Proof of Lemma \ref{lm:hatD_version2}]

The first part of Lemma \ref{lm:symmetry} implies that we could replace a determinant $\det\left[ p_i(w_j)\right]_{i,j=1}^n$ by a single term $n!\prod_{i=1}^n p_i(w_i)$ in a $n$-fold integral with respect to $w_1,\ldots,w_n$ as long as the remaining part of the integrand is antisymmetric in these variables. We apply this lemma to the Cauchy determinant $\rC(U^{(1)};U^{(2)}\sqcup \hat V^{(1)})$ in \eqref{eq:hat_D_simplification} and use the fact that the remaining part of the integrand is antisymmetric in the variables $u_1^{(1)},\ldots,u_{n_1}^{(1)}$ due to the first Cauchy determinant $\rC(\hat V^{(m)}\sqcup \cdots \sqcup \hat V^{(1)};U^{(1)})=\rC(\hat V; U^{(1)})$. Thus, we could replace $\rC(U^{(1)};U^{(2)}\sqcup \hat V^{(1)})$ by
\begin{equation}
\label{eq:aux_03_proof_Prop}
n_1! \prod_{\ell_2=1}^{n_2} \frac{1}{u_{\ell_2}^{(1)} - u_{\ell_2}^{(2)}} \cdot \prod_{\hat\ell_1=1}^{k_1} \frac{1}{\hat u_{\hat \ell_1}^{(1)} - \hat v_{\hat \ell_1}^{(1)}}
\end{equation}
where 
\begin{equation}
\hat u_{\hat\ell_1}^{(1)} = u_{n_{2}+\hat\ell_1}^{(1)},\quad \hat\ell_1=1,\ldots,k_1=n_1-n_2.
\end{equation}
On the other hand, we also note that the remaining part in the integrand is also antisymmetric in $\hat v_{1}^{(1)},\ldots,\hat v_{k_1}^{(1)}$ due to the Cauchy determinant $\rC(\hat V^{(m)}\sqcup \cdots \sqcup \hat V^{(1)};U^{(1)})$. Thus, we apply the first part of Lemma \ref{lm:symmetry} again, and can replace $\prod_{\hat\ell_1=1}^{k_1} \frac{1}{\hat u_{\hat \ell_1}^{(1)} - \hat v_{\hat \ell_1}^{(1)}}$ in \eqref{eq:aux_03_proof_Prop} by the Cauchy determinant $(k_1!)^{-1}\rC(\hat U^{(1)}; \hat V^{(1)})$, where 
\begin{equation}
\hat U^{(1)}:=(\hat u_1^{(1)},\ldots,\hat u_{k_1}^{(1)}) = (u_{n_2+1}^{(1)},\ldots,u_{n_1}^{(1)}).
\end{equation}
Now we have the following expression for $\hat{\mathcal{D}}_{\bsn}$
\begin{equation}
\label{eq:aux_04_proof_Prop}
    \begin{split}
      \hat{\mathcal{D}}_{\bsn} & = \prod_{i=1}^{m-1}\frac{n_i!}{k_i!} \cdot \prod_{i=1}^m \prod_{\ell_i=1}^{n_i}   \int_{\Gamma_{i,\rL}^\inn} \frac{\rd u_{\ell_i}^{(i)}}{2\pi\rmi}  
      \cdot \prod_{i=1}^m \prod_{\hat\ell_i=1}^{k_i} \int_{\Gamma_{1,\rR}} \frac{\rd \hat v_{\ell_i}^{(i)}}{2\pi\rmi}   \\
      &\quad \cdot \frac{n_1!}{k_1!}\cdot \rC(\hat V;U^{(1)})\cdot \rC(\hat U^{(1)};\hat V^{(1)}) \cdot \prod_{i=2}^m \rC(U^{(i)}; U^{(i+1)}\sqcup \hat V^{(i)}) \cdot \prod_{\ell_2=1}^{n_2} \frac{1}{ u_{\ell_2}^{(1)} - u_{ \ell_2}^{(2)}} \cdot  \prod_{i=1}^m \frac{\prod_{\ell_i=1}^{n_i}F_i(u_{\ell_i}^{(i)})}{\prod_{\hat\ell_i=1}^{k_i} f_i(\hat v_{\hat\ell_i}^{(i)})}.
    \end{split}
    \end{equation}
Note that comparing to \eqref{eq:hat_D_simplification}, the formula above simply did the following replacement in the integrand
\begin{equation}
\rC(U^{(1)};U^{(2)}\sqcup \hat V^{(1)}) \to \frac{n_1!}{k_1!} \rC(\hat U^{(1)}; \hat V^{(1)}) \cdot \prod_{\ell_2=1}^{n_2} \frac{1}{ u_{\ell_2}^{(1)} - u_{ \ell_2}^{(2)}}.
\end{equation}
Now we consider the factor $\rC(U^{(2)};U^{(3)}\sqcup \hat V^{(2)})$ in \eqref{eq:aux_04_proof_Prop}. Note that the remaining part is still antisymmetric in $u_1^{(2)},\ldots,u_{n_2}^{(2)}$ by part (ii) of Lemma \ref{lm:symmetry}. Similarly to the previous factor, we have the following replacement in the integrand
\begin{equation}
\rC(U^{(2)};U^{(3)}\sqcup \hat V^{(2)}) \to \frac{n_2!}{k_2!} \rC(\hat U^{(2)}; \hat V^{(2)}) \cdot \prod_{\ell_3=1}^{n_3} \frac{1}{ u_{\ell_3}^{(2)} - u_{ \ell_3}^{(3)}},
\end{equation}
where
\begin{equation}
\hat U^{(2)} := (\hat u_1^{(2)},\ldots,\hat u_{k_2}^{(2)}) = (u_{n_3+1}^{(2)},\ldots,u_{n_2}^{(2)}).
\end{equation}
We repeat this procedure and finally arrive at
\begin{equation}
\label{eq:aux_05_proof_Prop}
    \begin{split}
      \hat{\mathcal{D}}_{\bsn} & = \left(\prod_{i=1}^{m-1}\frac{n_i!}{k_i!}\right)^2 \cdot \prod_{i=1}^m \prod_{\ell_i=1}^{n_i}   \int_{\Gamma_{i,\rL}^\inn} \frac{\rd u_{\ell_i}^{(i)}}{2\pi\rmi}  
      \cdot \prod_{i=1}^m \prod_{\hat\ell_i=1}^{k_i} \int_{\Gamma_{1,\rR}} \frac{\rd \hat v_{\ell_i}^{(i)}}{2\pi\rmi}\\
      &\quad \rC(\hat V;U^{(1)}) 
      \cdot \prod_{i=1}^m \rC(\hat U^{(i)};\hat V^{(i)})  \cdot \prod_{i=2}^m\prod_{\ell_i=1}^{n_i} \frac{1}{ u_{\ell_i}^{(i-1)} - u_{ \ell_i}^{(i)}} \cdot  \prod_{i=1}^m \frac{\prod_{\ell_i=1}^{n_i}F_i(u_{\ell_i}^{(i)})}{\prod_{\hat\ell_i=1}^{k_i} f_i(\hat v_{\hat\ell_i}^{(i)})},
    \end{split}
\end{equation}
where
\begin{equation}
\hat U^{(i)}:= (\hat u_1^{(i)},\ldots,\hat u_{k_i}^{(i)}) = (u_{n_{i+1}+1}^{(i)},\ldots,u_{n_i}^{(i)}),\quad 1\le i\le m.
\end{equation}

\bigskip

Now we absorb all the $u$-factors in the integrand of \eqref{eq:aux_05_proof_Prop} into the Cauchy determinants. Note that all the variables $\hat u_{\hat\ell_i}^{(i)}= u_{n_{i+1}+\hat \ell_i}^{(i)}$ for $1\le i\le m$ and $1\le \hat \ell_i \le k_i$ are distinct. Moreover, for each $1\le \ell \le n_1$, there exists a unique pair $(i,\hat\ell_i)$ such that $1\le i\le m$, $1\le \hat\ell_i\le k_i$, and $\ell = n_{i+1}+\hat\ell_i$. We group all the factors involving $u_\ell^{(j)}$ together for each fixed $\ell$, which give
\begin{equation}
\prod_{j=1}^{i-1}\frac{F_j(u_{\ell}^{(j)})}{u_{\ell}^{(j)}-u_{\ell}^{(j+1)}} \cdot F_i(u^{(i)}_\ell).
\end{equation}
Then we absorb this term to the $\hat\ell_i$-th row of the Cauchy determinant $\rC(\hat U^{(i)};\hat V^{(i)})$. Note that every $u$-factor is uniquely absorbed into one row in one Cauchy determinant. Thus, we can evaluate each $u_\ell^{(i)}$ integral inside the corresponding Cauchy determinant if $i>1$. The restriction of $i>1$ is because $u_\ell^{(1)}$ also appears in the first Cauchy determinant $\rC(\hat V^{(1)};U^{(1)})$. Now the Cauchy determinant $\rC(\hat U^{(i)};\hat V^{(i)})$ becomes a new determinant whose entry at the $\hat \ell_i$-th row and $\hat \ell'_i$-th column, $1\le \hat\ell_i,\hat\ell'_i\le k_i$, is given by
\begin{equation}
 \int_{\Gamma_{2,\rL}^\inn} \frac{\rd u_{\ell}^{(2)}}{2\pi\rmi} \cdots \int_{\Gamma_{i,\rL}^\inn} \frac{u^{(i)}_{\ell}}{2\pi\rmi}\prod_{j=1}^{i-1}\frac{F_j(u^{(j)}_{\ell})}{u^{(j)}_{\ell}-u^{(j+1)}_{\ell}} \cdot \frac{F_i( u^{(i)}_\ell)}{\hat u^{(i)}_{\hat\ell_i} -\hat v^{(i)}_{\hat \ell'_i}} = h_i\left(u_{n_{i+1}+\ell_i}^{(1)},\hat v_{\hat\ell'_i}^{(i)}\right),
\end{equation}
where $\ell = n_{i+1}+\hat\ell_i$. Note that $u_\ell^{(i)} =\hat u_{\hat \ell_i}^{(i)}$. 
The identity \eqref{eq:hatD_expression2} follows after relabeling the variables in $U^{(1)}$.
\end{proof}

\bigskip

At the end of this subsection, we prove Proposition \ref{prop:alt_formula}.

\begin{proof}[Proof of Proposition \ref{prop:alt_formula}]

We apply Lemma \ref{lm:Andreief_ext} for \eqref{eq:hatD_expression2} and get, when $k_i=n_i-n_{i+1}\ge 0$, $1\le i\le m$,
\begin{equation}
\hat{\mathcal{D}}_{\bsn} = \prod_{i=1}^m\frac{(n_i!)^2}{k_i!} \prod_{i=1}^m \prod_{\hat \ell_i=1}^{k_i} \int_{\Gamma_{1,\rL}} \frac{\rd \hat u_{\hat \ell_i}^{(i)}}{2\pi\rmi}
\det\left[ \int_{\Gamma_{1,\rR}} h_i\left(\hat u_{\hat \ell_i}^{(i)},v\right)\frac{1}{f_i(v)} \cdot \frac{1}{v- \hat u_{\hat\ell_j}^{(j)}} \frac{\rd v}{2\pi\rmi}\right]_{(i,\hat\ell_i),(j,\hat\ell_j)},
\end{equation}
where the row/column indices are chosen from $\{(i,\hat\ell_i): 1\le i\le m, 1\le \hat\ell_i\le k_i\}$. It is direct to check the entry in the determinant equals to $\bfK\left(i,\hat u_{\hat\ell_i}^{(i)}; j, \hat u_{\hat\ell_j}^{(j)} \right )$, see the definitions of $\bfK$ in \eqref{eq:def_bfK} and $h_i$ in \eqref{eq:def_hi}. Inserting the above formula in \eqref{eq:sum_hatD}, we obtain
\begin{equation}
\prob\left(\bigcap_{i=1}^m \left\{\Airy(\alpha_i)\le \beta_i \right\}\right) = \sum_{k_1,\ldots,k_m\ge 0} \frac{1}{k_1!\cdots k_m!} \prod_{\hat \ell_i=1}^{k_i} \int_{\Gamma_{1,\rL}} \frac{\rd \hat u_{\hat \ell_i}^{(i)}}{2\pi\rmi}
\det\left[\bfK\left(i,\hat u_{\hat\ell_i}^{(i)}; j, \hat u_{\hat\ell_j}^{(j)} \right )\right]_{(i,\hat\ell_i),(j,\hat\ell_j)}.
\end{equation}
This is the same as the Fredholm determinant expansion \eqref{eq:Fredholm_series2}. This completes the proof.
\end{proof}

\section{Equivalence of two Fredholm determinants}
\label{sec:Equivalence}

In this section, we further rewrite the Fredholm determinant formula in Proposition \ref{prop:alt_formula} and show the equivalence between this formula and \eqref{eq:def_AiryProcess}.

We first conjugate the kernel $\bfK$ and rewrite the Fredholm determinant $\det(\rI +\bfK)$ to a new Fredholm determinant of an operator defined on $L^2(\realR_+)$. Recall the definition of the kernel $\bfK$ in \eqref{eq:def_bfK}. It can be rewritten as 
\begin{equation}
\bfK(i,z;j,u) = \int_0^\infty L_1(i,z;\lambda) L_2(\lambda;j,u)\rd \lambda,
\end{equation}
where 
\begin{equation}
\label{eq:def_L1}
L_1(i,z;\lambda) 
= \begin{dcases}
    \int \frac{\rd v}{2\pi\rmi} \frac{f_1(z)}{f_1(v)e^{\lambda v}}\cdot \frac{1}{z-v}, & i=1,\\
    \int \frac{\rd v}{2\pi\rmi} \prod_{\ell=2}^i \int \frac{\rd u_\ell}{2\pi\rmi} \frac{f_1(z)}{f_i(v)e^{\lambda v}}
    \cdot 
    \frac{\prod_{\ell=2}^i F_\ell(u_\ell)}{(z-u_2)\cdot \prod_{\ell=2}^{i-1}(u_\ell -u_{\ell+1})\cdot (u_i -v)}, & 2\le i\le m,
  \end{dcases}
\end{equation}
and
\begin{equation}
L_2(\lambda;j,u) = e^{\lambda u}.
\end{equation}
The integration contours in \eqref{eq:def_L1} are the same as in the definition of $\bfK$ in \eqref{eq:def_bfK}, but here we will make a specific choice of the contour for notation convenience when we do the contour deformations later. We take the $u$ contour to be the same as $\Gamma_{1,\rL}$, and the $u_\ell$, $2\le \ell \le m$, contours to be $\Gamma_{\ell,\rL}^\inn$. We also set the $v$-contour to be $\Gamma_{1,\rR}$. We will also use the contours $\Gamma_{\ell,\rL}^\out$, $2\le \ell\le m$. All these contours are defined the same way as in Section \ref{sec:Liu_formula}. See Figure \ref{fig:Gamma_contours} for an illustration.

One can also view $L_1$ and $L_2$ as operators, and denote $\bfL=L_2L_1$ the product operator with the following kernel, after using the above notations of the contours,
\begin{equation}
\label{eq:def_L}
\begin{split}
    \bfL(\lambda,\theta) &= \sum_{i=1}^m \int_{\Gamma_{1,\rL}} \frac{\rd u_1}{2\pi\rmi} L_2(\lambda;i,u_1)L_1(i,u_1;\theta)\\
           &=\sum_{i=1}^m \int_{\Gamma_{1,\rL}} \frac{\rd u_1}{2\pi\rmi} \int_{\Gamma_{1,\rR}} \frac{\rd v}{2\pi\rmi} \prod_{\ell=2}^i \int_{\Gamma_{\ell,\rL}^\inn }\frac{\rd u_\ell}{2\pi\rmi}
           \cdot \frac{f_1(u_1)e^{\lambda u_1}}{f_i(v)e^{\theta v}} \frac{\prod_{\ell=2}^i F_\ell(u_\ell)}{\prod_{\ell=1}^{i-1}(u_\ell-u_{\ell+1})\cdot (u_i-v)}.
\end{split}
\end{equation}
Recall that $\det(\rI+\bfK)$ is well-defined by its series expansion which is absolutely convergent. Also note that if any of the two Fredholm determinants $\det(\rI+AB)$ and $\det(\rI+BA)$ has an absolutely convergent series expansion, then the identity $\det(\rI+AB) =\det(\rI+BA)$ holds and both have absolutely convergent series expansions.  Therefore $\det(\rI +\bfK) =\det(\rI + \bfL)$, where
\begin{equation}
\label{eq:bfL_expansion}
\det(\rI + \bfL) =\sum_{k=0}^\infty \frac{1}{k!} \int_0^\infty \cdots \int_0^\infty \det\left[ \bfL(\lambda_i,\lambda_j)\right]_{i,j=1}^k \prod_{i=1}^n\rd\lambda_i.
\end{equation}
Moreover, the summation is absolutely convergent.

\bigskip

It turns out that the kernel $\bfL$ can be further simplified.
\begin{lm}
\label{lm:kernel_decomposition}
We have
\begin{equation}
\begin{split}
    &\bfL(\lambda,\theta) \\
    &= \sum_{k=1}^m (-1)^k \sum_{1\le i_1<\cdots<i_k\le m} \int_0^\infty \cdots \int_0^\infty A_1(\lambda; i_1,\gamma_1) \cdot \prod_{\ell=1}^{k-1} B(i_\ell,\gamma_\ell;i_{\ell+1},\gamma_{\ell+1}) \cdot  A_2(i_k,\gamma_k;\theta) \prod_{\ell=1}^k\rd\gamma_\ell,
    \end{split}
\end{equation}
where
\begin{equation}
A_1(\lambda;i,\gamma) = \int_{\Gamma_{1,\rL}} f_i(u) e^{(\lambda+\gamma) u}\frac{\rd u}{2\pi\rmi},\quad  A_2(i,\gamma;\theta) = \int_{\Gamma_{1,\rR}} \frac{1}{f_i(v) e^{(\theta+\gamma) v}} \frac{\rd v}{2\pi\rmi},\quad 1\le i\le m,
\end{equation}
and
\begin{equation}
B(i,\gamma;i',\gamma') = \int_{\rmi\realR} e^{(\gamma'-\gamma)w}\cdot \prod_{\ell=i+1}^{i'}F_\ell(w) \frac{\rd w}{2\pi\rmi},\quad 1\le i< i'\le m.
\end{equation}
\end{lm}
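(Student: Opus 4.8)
The plan is to start from the expression \eqref{eq:def_L} for $\bfL(\lambda,\theta)$ and simplify the inner multiple contour integral over $u_1,\ldots,u_i$ and $v$ by introducing auxiliary integration variables $\gamma_\ell\ge 0$ that split the simple poles $\tfrac{1}{u_\ell - u_{\ell+1}}$ and $\tfrac{1}{u_i - v}$. The key elementary identity is that for $a$ on the left half-plane and $b$ on the right half-plane, $\tfrac{1}{a-b} = \int_0^\infty e^{\gamma(a-b)}\rd\gamma$, and more generally that each factor $\tfrac{1}{u_\ell - u_{\ell+1}}$ with both variables on nested left contours can be written as $\pm\int_0^\infty e^{\gamma(u_\ell - u_{\ell+1})}\rd\gamma$ depending on which contour sits to the right; with the ordering of the $\Gamma^{\inn}_{\ell,\rL}$ contours this produces the right signs. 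After making all these substitutions, the $u_\ell$-integrals for $2\le \ell\le i$ decouple: the variable $u_\ell$ appears only through $F_\ell(u_\ell) e^{(\gamma_{\ell-1}-\gamma_\ell)u_\ell}$ (with appropriate indexing), which is exactly a contour integral of the form $B(\cdot,\cdot;\cdot,\cdot)$ once one deforms $\Gamma^{\inn}_{\ell,\rL}$ to the imaginary axis $\rmi\realR$. Similarly the $u_1$-integral over $\Gamma_{1,\rL}$ becomes $A_1(\lambda;\cdot,\gamma_1)$ and the $v$-integral over $\Gamma_{1,\rR}$ becomes $A_2(\cdot,\gamma_k;\theta)$.

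\medskip

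More precisely, I would proceed term by term in the sum over $i$ from \eqref{eq:def_L}. For the $i$-th term, introduce variables $\gamma_1,\ldots,\gamma_i\ge 0$ via
\begin{equation}
\frac{1}{(u_1-u_2)(u_2-u_3)\cdots(u_{i-1}-u_i)(u_i-v)} = (-1)^{?}\int_0^\infty\cdots\int_0^\infty e^{\gamma_1(u_1-u_2)}\cdots e^{\gamma_{i-1}(u_{i-1}-u_i)} e^{\gamma_i(u_i-v)}\prod_\ell\rd\gamma_\ell,
\end{equation}
where the sign is determined by which of the two contours in each pair lies to the right (so that the exponential decays). Collecting the $u_\ell$-dependence, the $\ell$-th intermediate integral ($2\le\ell\le i$) is $\int F_\ell(u_\ell) e^{(\gamma_{\ell-1}-\gamma_\ell)u_\ell}\tfrac{\rd u_\ell}{2\pi\rmi}$; deforming its contour to $\rmi\realR$ gives exactly $B(\ell-1,\gamma_{\ell-1};\ell,\gamma_\ell)$ in the notation of the lemma (the product $\prod_{\ell'=i+1}^{i'}F_{\ell'}$ in $B$ reduces to the single factor $F_\ell$ when $i'=i+1$). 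The $u_1$-integral over $\Gamma_{1,\rL}$ carrying $f_1(u_1)e^{\lambda u_1}e^{\gamma_1 u_1}$ is $A_1(\lambda;1,\gamma_1)$... but wait, the lemma indexes by an increasing subsequence $i_1<\cdots<i_k$ and writes $A_1(\lambda;i_1,\gamma_1)$, $A_2(i_k,\gamma_k;\theta)$, and $B(i_\ell,\gamma_\ell;i_{\ell+1},\gamma_{\ell+1})$ with the product $\prod_{\ell'=i_\ell+1}^{i_{\ell+1}}F_{\ell'}$. So the reorganization I described for a single term of \eqref{eq:def_L} corresponds to the maximal subsequence $i_\ell = \ell$, $k=i$; the other subsequences $i_1<\cdots<i_k$ with $i_k = i$ arise because I should not be forced to deform every intermediate contour to $\rmi\realR$ separately — rather, I can first combine consecutive factors. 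The cleaner route: rewrite the $i$-th term of \eqref{eq:def_L} as the single chain $A_1(\lambda;1,\gamma_1)\prod_{\ell=1}^{i-1}B(\ell,\gamma_\ell;\ell+1,\gamma_{\ell+1})A_2(i,\gamma_i;\theta)$, then observe that $B(\ell,\gamma_\ell;\ell+1,\gamma_{\ell+1}) = \int_{\rmi\realR} e^{(\gamma_{\ell+1}-\gamma_\ell)w}F_{\ell+1}(w)\tfrac{\rd w}{2\pi\rmi}$ and that a convolution of such $B$'s satisfies a composition (semigroup-type) identity $\int_0^\infty B(\ell,\cdot;\ell',\gamma)B(\ell',\gamma;\ell'',\cdot)\rd\gamma$ relating to $F_{\ell+1}\cdots F_{\ell''}$ via a factor of $(-1)$; unfolding this composition is what produces the sum over subsequences and the sign $(-1)^k$.

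\medskip

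So the actual structure of the proof is: (1) for each fixed $i$, write the inner integral in \eqref{eq:def_L} as a $\gamma$-convolution chain with $i$ intermediate variables using the pole-splitting identities, picking up an overall sign $(-1)^{i}$ (to be computed from the contour orientations — I expect each of the $i$ pole factors, except possibly the last $\tfrac{1}{u_i-v}$ which has a natural sign, to contribute, and the bookkeeping should give exactly $(-1)^i$ times the chain $A_1 \,B\,\cdots\,B\, A_2$ with all indices consecutive); (2) then recognize that summing over $i$ from $1$ to $m$ and re-expanding each $B$ as an integral over $\rmi\realR$ of $\prod F_\ell$, the telescoping of consecutive $F$-products is exactly encoded by allowing non-consecutive index jumps $i_1<\cdots<i_k$ — this is a purely combinatorial/algebraic identity that I would verify by induction on $m$ or by directly matching coefficients; (3) collect signs to get $(-1)^k$. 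The main obstacle I anticipate is step (1): keeping track of the correct signs when splitting each $\tfrac{1}{u_\ell - u_{\ell+1}}$, since the nested contours $\Gamma^{\inn}_{m,\rL},\ldots,\Gamma^{\inn}_{2,\rL},\Gamma_{1,\rL}$ are ordered from left to right, so $u_{\ell+1}$ lies to the \emph{left} of $u_\ell$ for the inner ones, meaning $\Re(u_\ell - u_{\ell+1})>0$ and the correct representation is $\tfrac{1}{u_\ell-u_{\ell+1}} = -\int_{-\infty}^0 e^{\gamma(u_\ell-u_{\ell+1})}\rd\gamma = -\int_0^\infty e^{-\gamma(u_\ell-u_{\ell+1})}\rd\gamma$; reconciling this with the form of $B$ (which has $e^{(\gamma'-\gamma)w}$, i.e.\ the sign of the exponent is tied to the ordering of the two $\gamma$'s rather than a fixed choice) and with the contour deformations from $\Gamma^{\inn}_{\ell,\rL}$ to $\rmi\realR$ (which requires checking no poles are crossed and that the arcs at infinity vanish, using the super-exponential decay of $F_\ell$) is where the care is needed. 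A secondary, milder obstacle is justifying the interchange of the (now multiple) $\gamma$-integrals with the $u$- and $v$-integrals, which follows from absolute convergence given the decay of $f_1$, $1/f_i$, and the $F_\ell$ along their respective contours.
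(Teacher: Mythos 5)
There is a genuine gap, and it sits exactly where you flagged your main obstacle. The heart of the paper's argument --- the piece your proposal replaces with something that does not work --- is the deformation of each intermediate contour $\Gamma_{\ell,\rL}^{\inn}$ \emph{across} $\Gamma_{1,\rL}$ to the outer contour $\Gamma_{\ell,\rL}^{\out}$, and then to a vertical line lying between $\Gamma_{1,\rL}$ and $\Gamma_{1,\rR}$. This deformation does two things at once. First, it produces the ordering $\Re(u_{i_1})<\Re(u_{i_2})<\cdots<\Re(u_{i_k})<\Re(v)$, so that \emph{every} pole factor $\tfrac{1}{u_{i_\ell}-u_{i_{\ell+1}}}$ and $\tfrac{1}{u_{i_k}-v}$ has negative real part and is written as $-\int_0^\infty e^{\gamma(\cdot)}\rd\gamma$ with one consistent sign; only then do the exponents attached to each variable chain into $A_1(\lambda;i_1,\gamma_1)$, $B(i_\ell,\gamma_\ell;i_{\ell+1},\gamma_{\ell+1})$, $A_2(i_k,\gamma_k;\theta)$, and only then does the overall sign come out to $(-1)^k$. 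With the contours as given (where $u_{\ell+1}$ lies to the \emph{left} of $u_\ell$), your pole-splitting produces $e^{(\lambda-\gamma_1)u_1}$ instead of $A_1$'s $e^{(\lambda+\gamma_1)u_1}$, $e^{(\gamma_{i-1}+\gamma_i)u_i}$ instead of $e^{(\gamma_i-\gamma_{i-1})u_i}$, and an overall sign $(-1)^1$ rather than $(-1)^i$; you noticed this but did not resolve it. Second, and more importantly, the deformation crosses the simple poles at $u_\ell=u_{\ell-1}$, and the resulting residues (which merge consecutive factors via $f_{i_{\ell-1}}\cdot F_{i_{\ell-1}+1}\cdots F_{i_\ell}=f_{i_\ell}$) are precisely what generates the sum over \emph{all} increasing subsequences $i_1<\cdots<i_k$.

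Your substitute for this mechanism --- writing the $i$-th term of \eqref{eq:def_L} as the single consecutive chain $A_1(\lambda;1,\gamma_1)\prod_{\ell=1}^{i-1}B(\ell,\gamma_\ell;\ell+1,\gamma_{\ell+1})A_2(i,\gamma_i;\theta)$ and then ``unfolding'' via a composition identity for the $B$'s --- fails. A count already shows something is wrong: summing consecutive chains over $i$ yields $m$ terms, while the lemma's right-hand side has $2^m-1$. And the proposed identity $\int_0^\infty B(\ell,\cdot;\ell',\gamma)B(\ell',\gamma;\ell'',\cdot)\rd\gamma\sim -B(\ell,\cdot;\ell'',\cdot)$ does not hold: with both $w$-variables on $\rmi\realR$ the $\gamma$-integral $\int_0^\infty e^{\gamma(w-w')}\rd\gamma$ is not absolutely convergent, and after separating the two vertical contours it produces $\tfrac{-1}{w-w'}$, whose $w'$-integral equals $-B(\ell,\cdot;\ell'',\cdot)$ \emph{plus} a nonvanishing two-variable remainder (which is exactly the genuine two-step chain). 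The correct statement runs in the opposite direction: the $i$-th term of \eqref{eq:def_L} decomposes as a sum of $2^{i-1}$ chains, one for each increasing subsequence ending at $i$, and that decomposition is supplied by the residue bookkeeping above. Finally, you cannot sidestep the residues by performing the $\gamma$-substitution first and deforming afterwards: once $u_\ell$ is moved to the right of $u_{\ell-1}$, the representation $\tfrac{1}{u_{\ell-1}-u_\ell}=\int_0^\infty e^{-\gamma(u_{\ell-1}-u_\ell)}\rd\gamma$ ceases to converge and Fubini fails.
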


\begin{proof}[Proof of Lemma \ref{lm:kernel_decomposition}]
Recall the definition of $\bfL(\lambda,\theta)$ in \eqref{eq:def_L}. We deform the contours $\Gamma_{\ell,\rL}^\inn$ to $\Gamma_{\ell,\rL}^{\out}$, $\ell=2,\ldots,i$, subsequently. Note that the $u_\ell$-integral will result in two terms: one term gives the residue of the integrand when $u_\ell=u_{\ell-1}$, and the other term gives the integral along the new contour $\Gamma_{\ell,\rL}^\out$. By sorting the terms, we get
\begin{equation}
\begin{split}
&\sum_{i=1}^m \int_{\Gamma_{1,\rL}} \frac{\rd u_1}{2\pi\rmi} \int_{\Gamma_{1,\rR}} \frac{\rd v}{2\pi\rmi} \prod_{\ell=2}^i \int_{\Gamma_{\ell,\rL}^\inn }\frac{\rd u_\ell}{2\pi\rmi}
           \cdot \frac{f_1(u_1)e^{\lambda u_1}}{f_i(v)e^{\theta v}} \frac{\prod_{\ell=2}^i F_\ell(u_\ell)}{\prod_{\ell=1}^{i-1}(u_\ell-u_{\ell+1})\cdot (u_i-v)}\\
&=\sum_{i=1}^m\sum_{k=1}^i  \sum_{1\le i_1<\cdots <i_{k}=  i} \int_{\Gamma_{i_1,\rL}^{\out}} \frac{\rd u_{i_1}}{2\pi\rmi} \cdots \int_{\Gamma_{i_k,\rL}^\out} \frac{\rd u_{i_k}}{2\pi\rmi} \int_{\Gamma_{1,\rR}} \frac{\rd v}{2\pi\rmi} \frac{f_{i_1}(u_{i_1})e^{\lambda u_{i_1}}}{f_i(v)e^{\theta v}} \frac{\prod_{\ell=2}^{k}\prod_{j=i_{\ell-1}+1}^{i_{\ell}}F_{j}(u_{i_{\ell}}) }{\prod_{\ell=1}^{k-1}(u_{i_\ell} -u_{i_{\ell+1}})\cdot (u_{i_k}-v)}\\
&=\sum_{k=1}^m  \sum_{1\le i_1<\cdots <i_{k}\le m} \int_{\Gamma_{i_1,\rL}^{\out}} \frac{\rd u_{i_1}}{2\pi\rmi} \cdots \int_{\Gamma_{i_k,\rL}^\out} \frac{\rd u_{i_k}}{2\pi\rmi} \int_{\Gamma_{1,\rR}} \frac{\rd v}{2\pi\rmi} \frac{f_{i_1}(u_{i_1})e^{\lambda u_{i_1}}}{f_{i_k}(v)e^{\theta v}} \frac{\prod_{\ell=2}^{k}\prod_{j=i_{\ell-1}+1}^{i_{\ell}}F_{j}(u_{i_{\ell}}) }{\prod_{\ell=1}^{k-1}(u_{i_\ell} -u_{i_{\ell+1}})\cdot (u_{i_k}-v)},
\end{split}
\end{equation}
where we set $\Gamma_{1,\rL}^\out=\Gamma_{1,\rL}$ for notation convenience.
Recall the definition of the functions $F_i$ in \eqref{eq:def_F}. The functions $\prod_{j=i+1}^{i'}F_j(w) =e^{(\alpha_{i'}-\alpha_i)w^2 + (\beta_{i'}-\beta_i)}$ decay super-exponentially fast when $w\to\infty$ along any vertical line since $\alpha_{i'}-\alpha_i>0$. Thus, we could deform the $\Gamma_{i_\ell,\rL}^\out$, $2\le\ell\le k$, to vertical lines as long as these contours are of the same order, and lie between the two contours $\Gamma_{i_1,\rL}^\out$ and $\Gamma_{1,\rR}$. See Figure \ref{fig:deformation_contours} for an illustration.

\begin{figure}
    \centering
    \begin{tikzpicture}[scale=1.5, decoration={
  markings,
  mark=at position 0.75 with {\arrow{>}}}
  ] 
  \draw[-latex] (-3,0) -- (3,0) node[right] {$\Re$};
  \draw[-latex] (0,-2) -- (0,2) node[above] {$\Im$};

  \draw[postaction={decorate},thick,blue] (-2,-1.5) -- (-2,1.5);
  \draw[postaction={decorate},thick,blue] (-0.5,-1.5) -- (-0.5,1.5);

  \draw[postaction={decorate},blue,thick,dashed] plot[smooth] coordinates {(-3,-1) (-2.5,-0.5) (-2,0) (-2.5,0.5) (-3,1)};
  \draw[postaction={decorate},blue,thick,dashed] plot[smooth] coordinates {(-1.5,-1) (-1,-0.5) (-0.5,0) (-1,0.5) (-1.5,1)};
  \draw[postaction={decorate},blue,thick] plot[smooth] coordinates {(-3.5,-1) (-3,-0.5) (-2.5,0) (-3,0.5) (-3.5,1)};

  \draw[postaction={decorate},red,thick] plot[smooth] coordinates {(3,-1) (2,-0.5) (1,0) (2,0.5) (3,1)};

  \filldraw[blue] (-1.6,0.2) circle (0.5pt);
 \filldraw[blue] (-1.5,0.2) circle (0.5pt);
  \filldraw[blue] (-1.4,0.2) circle (0.5pt);
  \node at (-3.5,0.5) {$\Gamma_{i_1,\rL}^\out$};
  \node at (-3,1.2) {$\Gamma_{i_2,\rL}^\out$};
  \node at (-1.6,1.2) {$\Gamma_{i_k,\rL}^\out$};
  \node at (2.7,1.2) {$\Gamma_{1,\rR}$};
  \node at (-2,1.8) {$u_{i_2}$};
  \node at (-0.5,1.8) {$u_{i_k}$};
\end{tikzpicture}
    \caption{Deformation of the contours: Initially, for each $2\le i\le k$, the contour of $u_{i_\ell}$ is the dashed contour $\Gamma_{i_k,\rL}^\out$; It is deformed to a solid vertical line. After the deformation, $\Re(u_{i_1}) < \Re (u_{i_2}) <\cdots <\Re (u_{i_k}) <\Re(v)$ when $u_{i_1}\in\Gamma_{i_1,\rL}^\out$, $v\in\Gamma_{1,\rR}$, and $u_{i_\ell}$ from the deform contour, $2\le \ell \le k$.}
    \label{fig:deformation_contours}
\end{figure}
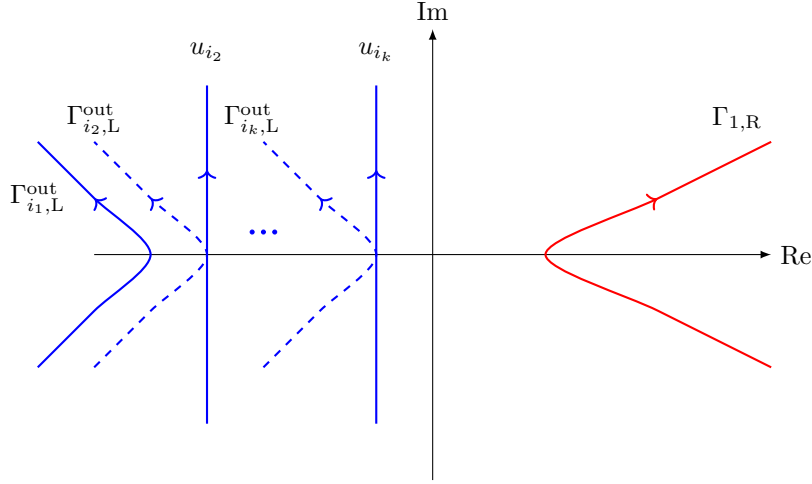

With the new contours, $\Re(u_{i_\ell}-u_{i_{\ell+1}})<0$ for all $1\le \ell\le k-1$, and $\Re(u_{i_k}-v)<0$. Hence we write
\begin{equation}
\frac{1}{u_{i_{\ell}} -u_{i_{\ell+1}}} = -\int_0^\infty e^{\gamma_\ell(u_{i_\ell}-u_{i_{\ell+1}})} \rd \gamma_{\ell}, \qquad \frac{1}{u_{i_k}-v} =-\int_0^\infty e^{\gamma_{k}(u_{i_k}-v)}\rd\gamma_{k}.
\end{equation}
The lemma follows immediately.
\end{proof}

With Lemma \ref{lm:kernel_decomposition}, we are able to express  $\bfL$ as the product of three operators. We extend the definition of $B$ to
\begin{equation}
\label{eq:def_B}
B(i,\gamma;i',\gamma') = \begin{dcases}
     \int_{\rmi\realR} e^{(\gamma'-\gamma)w}\cdot \prod_{\ell=i+1}^{i'}F_\ell(w) \frac{\rd w}{2\pi\rmi}, & 1\le i< i'\le m, \text{ and }\gamma,\gamma'>0, \\
     0,&\text{elsewhere}.
\end{dcases}
\end{equation}
Then 
\begin{equation}
\bfL = - A_1\left(\sum_{k=0}^{m-1} (-1)^k B^{k}\right) A_2.
\end{equation}
Similarly to the arguments before \eqref{eq:bfL_expansion}, we can write
\begin{equation}
\det (\rI +\bfL ) = \det \left(\rI  - \left(\sum_{k=0}^{m-1} (-1)^k B^{k}\right) A\right),
\end{equation}
where $A=A_2A_1$ with kernel given by
\begin{equation}
\label{eq:def_A}
A(i,\lambda;j,\theta) = \int_0^\infty \int_{\Gamma_{1,\rL}}\frac{\rd u}{2\pi\rmi} \int_{\Gamma_{1,\rR}} \frac{\rd v}{2\pi\rmi}\frac{f_j(u)e^{(\theta+\gamma)u}}{f_i(v)e^{(\lambda+\gamma)v}}\rd \gamma.
\end{equation}

Now we use the fact that $B$ is strictly upper-triangle (with zero diagonal entries), hence multiplying the operator $\rI+B$ inside the Fredholm determinant will not affect its value. This implies
\begin{equation}
\label{eq:final_bfL}
\det(\rI +\bfL) =\det\left((\rI+B)\left(\rI  - \left(\sum_{k=0}^{m-1} (-1)^k B^{k}\right) A\right)\right) =\det (\rI +B-A),
\end{equation}
where we used the fact that $B^m=0$ by its definition.

\bigskip
The last piece of the proof we need is the following lemma.

\begin{lm}
\label{lm:kernel_evaluation}
We have the following identities.
\begin{enumerate}[(i)]
\item For all $1\le i,j\le m$, and $\lambda,\theta>0$,
\begin{equation}
\label{eq:evaluation_A}
\begin{split}
&A(i,\lambda;j,\theta)\\
&= e^{-\frac23\alpha_i^3 -(\lambda+\beta_i)\alpha_i+ \frac{2}{3}\alpha_j^3 +(\theta+\beta_j)\alpha_j }\int_0^\infty e^{- (\alpha_i-\alpha_j)\gamma}\Ai\left(\lambda+\beta_i+\alpha_i^2 +\gamma\right)\Ai\left(\theta+\beta_j+\alpha_j^2+\gamma\right) \rd\gamma.
\end{split}
\end{equation}
\item For all $1\le i< j\le m$,
\begin{equation}
\label{eq:evaluation_B}
\begin{split}
    &B(i,\lambda;j,\theta) \\
    &= \frac{1}{2\sqrt{\pi(\alpha_j-\alpha_i)}} e^{-\frac{1}{4(\alpha_j-\alpha_i)}(\beta_j+\theta -\beta_i-\lambda)^2}\\
    &=e^{-\frac23\alpha_i^3 -(\lambda+\beta_i)\alpha_i+ \frac{2}{3}\alpha_j^3 +(\theta+\beta_j)\alpha_j }\int_{-\infty}^\infty e^{- (\alpha_i-\alpha_j)\gamma}\Ai\left(\lambda+\beta_i+\alpha_i^2 +\gamma\right)\Ai\left(\theta+\beta_j+\alpha_j^2+\gamma\right) \rd\gamma.
\end{split}
\end{equation}
\end{enumerate}
\end{lm}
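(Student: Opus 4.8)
The plan is to evaluate both kernels by recognizing the contour integrals as integral representations of the Airy function after completing the cube (for $A$) or the square (for $B$). For part (i), start from the definition \eqref{eq:def_A},
\begin{equation*}
A(i,\lambda;j,\theta) = \int_0^\infty \rd\gamma \int_{\Gamma_{1,\rL}}\frac{\rd u}{2\pi\rmi} \int_{\Gamma_{1,\rR}} \frac{\rd v}{2\pi\rmi}\; \frac{f_j(u)e^{(\theta+\gamma)u}}{f_i(v)e^{(\lambda+\gamma)v}},
\end{equation*}
and recall $f_i(w) = e^{-\frac13 w^3 + \alpha_i w^2 + \beta_i w}$. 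In the $u$-integral the exponent is $-\frac13 u^3 + \alpha_j u^2 + (\beta_j + \theta + \gamma)u$; the substitution $u \mapsto u - \alpha_j$ removes the quadratic term at the cost of the prefactor $e^{\frac23\alpha_j^3 + (\theta+\beta_j)\alpha_j + \gamma\alpha_j}$ (and shifts the argument of the eventual Airy function by $\alpha_j^2$), turning the integral into $e^{\frac23\alpha_j^3+(\theta+\beta_j)\alpha_j+\gamma\alpha_j}\,\Ai(\theta+\beta_j+\alpha_j^2+\gamma)$ after recognizing \eqref{eq:def_Airy} (the $\Gamma_{1,\rL}$ contour is homotopic to $\Gamma_\rL$ since the cubic term controls the decay). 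The $v$-integral is the mirror image: the exponent is $+\frac13 v^3 - \alpha_i v^2 - (\beta_i+\lambda+\gamma)v$, so with $v \mapsto -(v - \alpha_i)$ — equivalently reflecting the right-half-plane contour $\Gamma_{1,\rR}$ to a left-half-plane one — it becomes $e^{-\frac23\alpha_i^3 - (\lambda+\beta_i)\alpha_i - \gamma\alpha_i}\,\Ai(\lambda+\beta_i+\alpha_i^2+\gamma)$. Multiplying the two and collecting the $\gamma$-dependent exponentials $e^{\gamma\alpha_j - \gamma\alpha_i} = e^{-(\alpha_i-\alpha_j)\gamma}$ inside the $\gamma$-integral yields exactly \eqref{eq:evaluation_A}. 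One should record the contour-deformation justification: the shifts $u\mapsto u-\alpha_j$, $v\mapsto v-\alpha_i$ move the endpoints but not the asymptotic directions, and the cubic terms guarantee absolute convergence throughout, so no poles or boundary contributions arise.

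For part (ii), start from \eqref{eq:def_B}: for $1\le i<j\le m$ and $\lambda,\theta>0$,
\begin{equation*}
B(i,\lambda;j,\theta) = \int_{\rmi\realR} e^{(\theta-\lambda)w}\,\prod_{\ell=i+1}^{j}F_\ell(w)\,\frac{\rd w}{2\pi\rmi},
\end{equation*}
and telescoping the product of $F_\ell$'s from \eqref{eq:def_F} gives $\prod_{\ell=i+1}^{j}F_\ell(w) = e^{(\alpha_j-\alpha_i)w^2 + (\beta_j-\beta_i)w}$. So the integrand is $e^{(\alpha_j-\alpha_i)w^2 + (\beta_j-\beta_i+\theta-\lambda)w}$, a Gaussian integral over the imaginary axis; since $\alpha_j-\alpha_i>0$, completing the square and using the standard evaluation $\int_{\rmi\realR} e^{a w^2 + bw}\frac{\rd w}{2\pi\rmi} = \frac{1}{2\sqrt{\pi a}}\,e^{-b^2/(4a)}$ (with $w = \rmi t$, $\int_\realR e^{-at^2+\rmi bt}\frac{\rd t}{2\pi} = \frac{1}{2\sqrt{\pi a}}e^{-b^2/(4a)}$) produces the first displayed expression in \eqref{eq:evaluation_B}. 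For the second equality, one recognizes the Gaussian as the full-line integral $\int_{-\infty}^\infty e^{-(\alpha_i-\alpha_j)\gamma}\Ai(\lambda+\beta_i+\alpha_i^2+\gamma)\Ai(\theta+\beta_j+\alpha_j^2+\gamma)\,\rd\gamma$ times the same prefactor as in \eqref{eq:evaluation_A}; this is the classical identity $\int_{-\infty}^\infty \Ai(x+\gamma)\Ai(y+\gamma)e^{c\gamma}\,\rd\gamma = \frac{1}{2\sqrt{\pi c}}\exp\!\big(\frac{c^3}{12} - \frac{c(x+y)}{2} + \frac{(x-y)^2}{4c}\big)$ for $c>0$, which one can either cite or derive by inserting the contour representation \eqref{eq:def_Airy} for each Airy factor, performing the (now convergent) $\gamma$-integral, and then doing the two resulting $u$- and $v$-integrals — which is essentially running the computation of part (i) in reverse with the $\gamma$-range extended to $\realR$. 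Matching the prefactor $e^{-\frac23\alpha_i^3 - (\lambda+\beta_i)\alpha_i + \frac23\alpha_j^3 + (\theta+\beta_j)\alpha_j}$ against $e^{-b^2/(4a)}$ with $a = \alpha_j-\alpha_i$, $b = \beta_j - \beta_i + \theta - \lambda$ after expanding $\frac{c^3}{12}$-type terms is the one genuinely fiddly algebraic check.

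The main obstacle I anticipate is not conceptual but bookkeeping: keeping the completing-the-cube/square shifts consistent so that the prefactor in \eqref{eq:evaluation_A} and the one implicitly appearing in the second line of \eqref{eq:evaluation_B} come out identical, and verifying that the $\gamma$-integral in part (i) converges (it does, because for $i\le j$ we have $\alpha_i - \alpha_j \le 0$, wait — actually $\alpha_i < \alpha_j$ so $e^{-(\alpha_i-\alpha_j)\gamma}$ grows; convergence at $\gamma\to+\infty$ comes from the super-exponential decay of the product of two Airy functions with shifted-by-$\gamma$ arguments, so one should remark on this). A secondary point requiring care is the contour homotopy: one must check that deforming $\Gamma_{1,\rL}$ and $\Gamma_{1,\rR}$ (after the linear shifts) to the standard Airy contour $\Gamma_\rL$ and its reflection sweeps across no singularities — immediate here since the only potential singularity, the factor $1/(u-v)$, has already been resolved into the $\gamma$-integral, leaving entire integrands. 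Everything else is a routine Gaussian or Airy integral evaluation.
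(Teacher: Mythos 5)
Your proposal is correct and follows essentially the same route as the paper: completing the cube in the $u$- and $v$-integrals to recognize Airy functions for part (i), a direct Gaussian integral over $\rmi\realR$ for the first line of part (ii), and the classical Airy--Gaussian identity (the paper cites \cite[Lemma 2.6]{Okounkov02}) for the second line. One small slip: the last term in the exponent of that identity should be $-\frac{(x-y)^2}{4c}$, not $+\frac{(x-y)^2}{4c}$ (with the plus sign the right-hand side would diverge as $|x-y|\to\infty$ while the left-hand side tends to zero); with the correct sign your prefactor bookkeeping goes through.
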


\begin{proof}[Proof of Lemma \ref{lm:kernel_evaluation}]
Recall the definitions of $f_i(w) = e^{-\frac13w^3 +\alpha_iw^2 +\beta_i w}$ in \eqref{eq:def_f}, and the Airy function in \eqref{eq:def_Airy}. Inserting these in \eqref{eq:def_A}, we obtain \eqref{eq:evaluation_A}  by a direct computation.

Similarly, recall the definition of $F_\ell$ function in \eqref{eq:def_F}. The first equation of \eqref{eq:evaluation_B} follows from a direct computation of \eqref{eq:def_B}. The second part follows from the following identity (see \cite[Lemma 2.6]{Okounkov02})
\begin{equation}
\int_{-\infty}^\infty e^{xz}\Ai(z+a)\Ai(z+b)\rd z =\frac{1}{2\sqrt{\pi x}}e^{\frac{x^3}{12}-\frac{a+b}{2}x -\frac{(a-b)^2}{4x}},\quad x>0.
\end{equation}
\end{proof}

Now we insert this lemma to \eqref{eq:final_bfL}. The Fredholm determinant equals to \eqref{eq:def_AiryProcess} after a conjugation and a simple notation change.

\section*{Acknowledgements}

Both authors are supported in part by the NSF grants DMS-2246683 and DMS-2054735.

\bibliographystyle{alpha}
\def\cydot{\leavevmode\raise.4ex\hbox{.}}

\end{document}